\definecolor{halfgray}{gray}{0.55} 
\definecolor{webgreen}{rgb}{0,0.5,0}
\definecolor{webbrown}{rgb}{.6,0,0} \hypersetup{%
\definecolor{halfgray}{gray}{0.55} 
\definecolor{webgreen}{rgb}{0,0.5,0}
\definecolor{webbrown}{rgb}{.6,0,0} \hypersetup{%
\def\ds@whichfont{dsrom}
\DeclareMathAlphabet{\mathds}{U}{\ds@whichfont}{m}{n}
\newtheorem{theorem}{Theorem}[section]
\newtheorem{lemma}[theorem]{Lemma}
\newtheorem{example}[theorem]{Example}
\theoremstyle{definition}
\newtheorem{definition}[theorem]{Definition}
\newtheorem{remark}[theorem]{Remark}
\theoremstyle{plain}
\theoremstyle{plain}
\theoremstyle{plain}
\theoremstyle{remark}
\newtheorem*{acknowledgement*}{Acknowledgement}
\DeclareMathOperator{\Ker}{Ker}
\newcommand{\cA}{{\mathcal A}}
\newcommand{\bbN}{{\mathbb N}}
\newcommand{\bbR}{{\mathbb R}}
\newcommand{\bbZ}{{\mathbb Z}}
\newcommand{\Id}{\text{Id}}
\begin{document}
\title[A characterization of $(\mu,\nu)$-dichotomies via admissibility]{A characterization of $(\mu,\nu)$-dichotomies via admissibility}

\author[Lucas Backes]{Lucas Backes \orcidlink{0000-0003-3275-1311}}
\address{\noindent Departamento de Matem\'atica, Universidade Federal do Rio Grande do Sul, Av. Bento Gon\c{c}alves 9500, CEP 91509-900, Porto Alegre, RS, Brazil.}
\email[Lucas Backes]{lucas.backes@ufrgs.br}

\author[Davor Dragi\v cevi\' c]{Davor Dragi\v cevi\' c \orcidlink{0000-0002-1979-4344}} 
\address{Department of Mathematics, University of Rijeka, Croatia}
\email[Davor Dragi\v cevi\' c]{ddragicevic@math.uniri.hr}

\keywords{discrete nonautonomous systems, $(\mu,\nu)$-dichotomy, admissibility, persistence}
\subjclass[2020]{Primary: 34D09, 39A06; Secondary: 37D25}

\maketitle

\begin{abstract}
We present a characterization of $(\mu,\nu)$-dichotomies in terms of the admissibility of certain pairs of weighted spaces for nonautonomous discrete time dynamics acting on Banach spaces. Our general framework enables us to treat various settings in which no similar result has been previously obtained as well as to recover and refine several known results. We emphasize that our results hold without any bounded growth assumption and the statements make no use of Lyapunov norms.
Moreover, as a consequence of our characterization, we study the robustness  of $(\mu, \nu)$-dichotomies, i.e. we show that this notion persists under small but very general linear perturbations. 
\end{abstract}

\section{Introduction}

Hyperbolicity is one of the cornerstone notions in the modern theory of dynamical systems. 
Roughly speaking, a system is said to be hyperbolic if the phase space  splits into two complementary directions such that along one of these directions we have exponential expansion with time, while in the other one we have exponential contraction. On the other hand, sometimes it may be very complicated to verify if a given system is hyperbolic. Consequently, an important problem consists in presenting different characterizations of this property. 

In the present paper we are interested in characterizing the nonautonomous version of hyperbolicity. More precisely, we will present a characterization of  the notion of \emph{$(\mu,\nu)$-dichotomy}.  
The notion of $(\mu, \nu)$-dichotomy requires that the phase space splits (at each moment of time) into two directions: the stable and the unstable direction. Along stable/unstable direction dynamics contracts/expands with the rate of contraction/expansion  given by a function $\mu$, while the speed of contraction/expansion is measured using a function $\nu$. We stress that the notion of $(\mu, \nu)$-dichotomy includes the notion of (nonuniform) exponential dichotomy as a very particular case.
The type of characterization we are looking for has a long history, which goes back to the work of Perron \cite{Per} for ODEs, and is given in terms of what is nowadays called the \emph{admissibility property}. 

Given a nonautonomous dynamical system
\begin{equation}\label{eq: LE intro}
x_{n+1}=A_nx_n,\quad n\in I,
\end{equation}
where $A_n\colon X\to X$, $n\in I$, are linear maps acting on a Banach space $(X,\|\cdot \|)$ and $I$ is an interval of $\bbZ$, we say that the pair $(Y, Z)$ is \emph{(properly) admissible} for Eq.~\eqref{eq: LE intro}, where $Y$ and $Z$ are subspaces of $X^{I}$, if for every sequence $(y_n)_{n\in I}$ in $Y$ there exists a (unique) sequence $(x_n)_{n\in I}$ in $Z$ such that
\[
    x_{n+1}=A_nx_n+y_{n+1}, \quad \text{ for all } n\in I.
\]
Thus, a prototype result says that for $I=\bbZ$, if $Y=Z=\ell^\infty$ (the space of all bounded two-sided sequences in $X$) then the proper admissibility of the pair $(Y,Z)$ is equivalent to the fact that Eq.~\eqref{eq: LE intro} admits an exponential dichotomy (see \cite{Hen}) and the characterizations of $(\mu,\nu)$-dichotomy that we are seeking have this  flavor. Nevertheless, our results involve two or three (depending on $I$) admissible pairs of some special \emph{weighted spaces} instead of just $\ell^\infty$ (see Sections \ref{sec: one sided} and \ref{sec: two sided}). 

The importance of our results stems from our general framework. More precisely, we are able to treat in a unified manner various settings in which no similar result has been previously obtained as well as to recover and refine several known results. We emphasize that our results hold without any bounded growth assumption for \eqref{eq: LE intro} and the statements make no use of Lyapunov norms.
We explain a bit more the importance of these facts bellow. Moreover, as a consequence of our characterization, we obtain that the notion of $(\mu, \nu)$-dichotomy persists under small but very general linear perturbations.

\subsection{Relations with previous results} 
As already emphasized, characterizations of dichotomies in terms of admissibility properties have a long history. For instance, a fundamental contribution to this line of research is due to Massera and Sch\"affer \cite{MS1,MS2} (see also Coppel \cite{Cop}) who presented a complete characterization (in terms of admissibility) of the notion of an exponential dichotomy, extending the original work of Perron \cite{Per} (see also~\cite{Li} for related results in the case of discrete time) which dealt with exponentially stable systems. 
The case of infinite-dimensional dynamics were first considered by Dalec$'$ki\u{\i} and  Kre\u\i{n} \cite{DK} in the case of continuous time and by Coffman and Sch\"{a}ffer~\cite{CS} as well as Henry \cite{Hen} in the case of noninvertible discrete time dynamics. More recent contributions devoted to the characterization of uniform exponential dichotomies 
include (but are not limited to) \cite{AM, Huy, LRS, MSS2, MRS, S, SS2, SS1}. We also refer to~\cite{BDV} for a detailed overview of this line of research and additional references.

In the case of \emph{nonuniform} exponential dichotomies, there are essentially three type of results available. Firstly, there are results which do not give a complete characterization of nonuniform exponential dichotomies  via admissibility but rather only sufficient conditions for the existence of dichotomy (see~\cite{MSS1, PM, SBS, SS} and references therein). Secondly, there are various results in which a complete characterization of nonuniform dichotomies via admissibility is obtained in which the corresponding input-output spaces are constructed in terms of Lyapunov norms which are used to transform nonuniform behavior into the uniform one (see~\cite{BDV0, BDV1, BDV2, LP1, LP2,  ZLZ}). Finally, since  Lyapunov norms are difficult to construct without firstly  establishing the existence of nonuniform behavior, it was of interest to explore whether nonuniform exponential dichotomies can be characterized in terms of admissibility without the use of Lyapunov norms. The number of very recent results show that this is indeed possible (see~\cite{DZZ, DZZ1, WX, ZZ}).

As already mentioned, besides exponential dichotomies, it of interest to study dichotomic behavior when the rates of expansion/contraction along the unstable/stable direction are not of exponential type. To the best of our knowledge such dichotomies were first studied by Muldowney~\cite{M} and Naulin and Pinto~\cite{NP}. More recently, a systematic study of such behavior was initiated by Barreira and Valls~\cite{BV}, as well as Bento and Silva~\cite{BS2, BS1}. Dragi\v cevi\' c~\cite{D1} characterized nonuniform polynomial dichotomies in terms of admissibility for discrete dynamics (see~\cite{D2} for related result for continuous time). An alternative approach which relies on the relationship between exponential and polynomial dichotomies was proposed in~\cite{DSS}. In the case of continuous time, a very general class of dichotomies associated to differentiable growth rates was characterized via admissibility in~\cite{DPL}.
Finally,  the most general results in the case of discrete time were obtained  by Silva \cite{Silva}. We stress that all these works rely on the usage of Lyapunov norms. 

In the present paper we aim to characterize $(\mu, \nu)$-dichotomies in terms of  admissibility without the use of Lyapunov norms. Therefore, our results complement those obtained by Silva~\cite{Silva}. We emphasize that even in the case of uniform dichotomies associated to a growth rate $\mu$, our results do not coincide with those in~\cite{Silva} as the input-output spaces are different. Moreover, in contrast to~\cite{Silva} we:
\begin{enumerate}
\item do not impose any bounded growth conditions;
\item are able to treat the case of arbitrary growth rates, while in~\cite{Silva} it is required that $\mu$ is ``slowly growing'';
\item discuss the case of two-sided dynamics which was not treated in~\cite{Silva}.
\end{enumerate}
Our proof was inspired by the work of Dragi\v cevi\'c, Zhang and Zhou \cite{DZZ, DZZ1} in which the authors completely characterize, in terms of admissibility, the notion of \emph{nonuniform exponential dichotomy}, which, as already mentioned, is a particular example of $(\mu,\nu)$-dichotomy.

The paper is organized as follows. In Section \ref{sec: prelim}, we introduce the notion of $(\mu,\nu)$-dichotomy and the weighted spaces we are going to work with along the text. Section \ref{sec: one sided} is devoted to present the characterization as well as the robustness of $(\mu,\nu)$-dichotomies in the case of one-sided dynamics while Section \ref{sec: two sided} is devoted to study the two-sided case.

\section{Preliminaries}\label{sec: prelim}

Let $X=(X, \|\cdot \|)$ be an arbitrary Banach space and $I$ be either equal to $\bbZ$ or $\bbN$. By $\mathcal B(X)$ we will denote the space of all bounded linear operators on $X$. The operator norm on $\mathcal B(X)$ will be denoted also by $\|\cdot \|$.  Given a sequence $(A_n)_{n\in I}$ of bounded linear operators in $\mathcal B(X)$, let us consider the associated linear difference equation
\begin{equation}\label{LE}
    x_{n+1}=A_nx_n, \quad n\in I.
\end{equation}
For $m, n\in I$, the evolution operator associated to \eqref{LE} is given by
\[
\cA (m, n)=\begin{cases}
A_{m-1}\cdots A_n & \text{for $m>n$;}\\
\Id  &\text{for $m=n$,} \\
\end{cases}
\]
where $\Id$ denotes the identity operator on $X$.

\subsection{Growth rates and $(\mu,\nu)$-dichotomy}
Let $ \mu =(\mu_n)_{n\in I}$ be a strictly  increasing sequence of positive numbers  such that 
\begin{equation}\label{eq: growth}
     \lim_{n\to +\infty}\mu_n=+\infty.\end{equation}
Moreover, in the case when $I=\bbZ$ we assume also that $\lim\limits_{n\to -\infty}\mu_n=0$. We call such sequence $\mu$ a \emph{growth rate}. Furthermore, let $ \nu =(\nu_n)_{n\in I}$ be an arbitrary sequence with $\nu _n \geq 1$ for every $n\in I$.

\begin{definition}
We say that \eqref{LE} admits a $(\mu,\nu)$-dichotomy if the following conditions are satisfied:
\begin{enumerate}
\item  there exists a family of projections $P_n$, $n\in I$, such that 
\begin{equation}\label{eq: Pn An commute}
A_nP_n=P_{n+1}A_n;
\end{equation}
\item $A_n\rvert_{\Ker P_n} \colon \Ker P_n \to \Ker P_{n+1}$ is an invertible operator for each $n\in I$;
\item there exist $D, \lambda >0$   such that 
\begin{equation}\label{eq: Es discrete}
\|\cA(m,n)P_n\|\leq D\nu_n \left(\frac{\mu_m}{\mu_n}\right)^{-\lambda} \quad \text{ for } m\geq n
\end{equation}
and
\begin{equation}\label{eq: Eu discrete}
\|\cA(m,n)(\Id-P_n)\|\leq D\nu_n \left(\frac{\mu_n}{\mu_m}\right)^{-\lambda} \quad \text{ for } m\leq n
\end{equation}
where 
\[
\cA(m, n):=\big{(}\cA(n, m)\rvert_{\Ker P_m} \big{)}^{-1} \colon \Ker P_n \to \Ker P_m,
\]
for $m\le n$. 

\end{enumerate}
\end{definition}

\begin{remark}\label{remark: particular cases of dich}
We observe that the notion of $(\mu, \nu)$-dichotomy, which appeared earlier, for instance, in \cite{BS2, BS1, Pinto}, generalizes several well-known notions of dichotomy. To illustrate this claim, let us restrict ourselves to the case when $I=\bbN$ and suppose initially that $\nu_n=C$ for some $C\geq 1$ and $n\in \bbN$. Then, by taking $\mu_n=e^{n}$, $n\in \bbN$, we recover the notion of \emph{exponential dichotomy} (see \cite{Cop,MS1,MS2}); by taking $\mu_n=1+n$, $n\in \bbN$, we recover the notion of \emph{polynomial dichotomy} (see \cite{D1}); by taking $\mu_n= \log(2+n)$, $n\in \bbN$, we recover the notion of \emph{logarithmic dichotomy} (see \cite{Silva}). In all these cases, if we take $\nu_n=\mu_n^\varepsilon$ instead of $\nu_n=C$ for some small $\varepsilon>0$ and $n\in \bbN$, we get \emph{nonuniform} versions of those dichotomies.
\end{remark}

\subsection{Weighted spaces} In order to describe our main results, we need to introduce a number of special \emph{weighted spaces}. 
Given a growth rate $\mu=(\mu_n)_{n\in I}$ and $\beta \in \bbR$, let us consider the weighted space $\ell_\beta^\infty $ which consist of all sequences $\mathbf x=(x_n)_{n\in I}\subset X$ such that 
\[
\|\mathbf x\|_{\infty, \beta}:=\sup_{n\in I}(\mu_n^\beta \|x_n\|)<+\infty.
\]
Then, it is not difficult to check that $\|\cdot\|_{\infty, \beta}$ is a norm in $\ell_\beta ^\infty $ and $(\ell_\beta^\infty, \| \cdot \|_{\infty, \beta})$ is a Banach space. Similarly, if $\nu=(\nu_n)_{n\in I}$ is a sequence such that $\nu_n\ge 1$ for each $n\in I$, we consider the weighted space $\ell^1_\beta$ which consist of all sequences $\mathbf x=(x_n)_{n\in I}\subset X$ such that 
\[
\| \mathbf x\|_{1, \beta}:=\sum_{n\in I}  \mu_n^\beta  \nu_n \|x_n\|<+\infty.
\]
Again it is not difficult to verify that $(\ell_\beta^1, \| \cdot \|_{1, \beta})$ is a Banach space. Moreover, given a closed subspace $Z\subset X$, for $j\in \{1,\infty\}$, let 
$\ell_{\beta, Z}^j$ be the space which consists of all sequences $\mathbf x=(x_n)_{n\in I}\in \ell_\beta^j$ such that $x_0\in Z$. Then, 
$\ell_{\beta, Z}^j$ is a closed subspace of $\ell_\beta^j$ and, in particular, $(\ell_{\beta,Z}^j, \| \cdot \|_{j, \beta})$ is a Banach space. In the particular case when $Z=\{0\}$, $\ell^1_{\beta, Z}$ will be denoted by $\ell^1_{\beta, 0}$.

In the case when $I=\bbZ$, we will also need to consider some extra spaces described as follows. Observe initially that, since $(\mu_n)_{n\in \bbZ}$ is strictly increasing and $\lim_{n\to -\infty}\mu_n=0$ and $\lim_{n\to +\infty}\mu_n=+\infty$, there exists $n_0\in \bbZ$ such that $\mu_n< 1$ for every $n< n_0$ and $\mu_n\geq 1$ for every $n\geq n_0$. Then, we consider the space $\ell_{\beta,|\cdot |}^\infty $ which consist of all sequences $\mathbf x=(x_n)_{n\in \mathbb Z}\subset X$ such that 
\[
\|\mathbf x\|_{\infty, \beta,|\cdot |}:=\max \left\{\sup_{n< n_0}(\mu_n^{|\beta|} \|x_n\|), \sup_{n\geq n_0}(\mu_n^{-|\beta|} \|x_n\|)\right\} <+\infty.
\]
Similarly, let $\ell^1_{\beta,|\cdot |}$ be the space consisting of all sequences $\mathbf x=(x_n)_{n\in \mathbb Z}\subset X$ such that 
\[
\| \mathbf x\|_{1, \beta,|\cdot |}:=\sum_{n<n_0 } \mu_n^{|\beta|}  \nu_n \|x_n\|+\sum_{n\geq n_0 } \mu_n^{-|\beta|}  \nu_n \|x_n\|<+\infty.
\]
Once again one can easily check that $(\ell_{\beta,|\cdot|}^\infty, \| \cdot \|_{\infty, \beta,|\cdot|})$ and $(\ell_{\beta,|\cdot|}^1, \| \cdot \|_{1, \beta,|\cdot|})$ are Banach spaces. Observe that, even though all the spaces defined above depend on $\mu$, $\nu$ and $I$, we do not write these dependence explicitly in order to simplify notation. The dependence will be clear from the context.

\section{The case of one-sided dynamics}\label{sec: one sided}
In this section we will restrict our attention to the case of one-sided dynamics, that is, to the case when $I=\bbN$. 

\subsection{Characterization of $(\mu,\nu)$-dichotomy} \label{sec: charct one sided}
We will now present a characterization of $(\mu ,\nu)$-dichotomy in terms of the admissibility of the spaces $\ell^1_{\beta,0}$ and $\ell^\infty_{\beta,Z}$ for some appropriate subspace $Z\subset X$ and $\beta\in \bbR$.

\begin{theorem}\label{theo: from dich to adm}
Suppose that $(A_n)_{n\in \mathbb N}$ admits a $(\mu,\nu)$-dichotomy with respect to projections $P_n$ and let $\lambda>0$ be such that~\eqref{eq: Es discrete} and~\eqref{eq: Eu discrete} hold. Moreover, suppose that there exists $\varepsilon \in [0, \lambda)$ such that \begin{equation}\label{munu}\sup_{n\in \mathbb N}(\mu_n^{-\varepsilon}\nu_n)<+\infty.
\end{equation}
Set $Z:=\Ker P_0$. Then,  for each $\beta \in (-(\lambda-\varepsilon), \lambda)$ and $\mathbf y=(y_n)_{n\in \mathbb N}\in \ell^1_{\beta, 0}$, there exists a unique $\mathbf x=(x_n)_{n\in \mathbb N}\in \ell_{\beta, Z}^\infty$ such that
\begin{equation}\label{adm}
x_{n+1}-A_n x_n=y_{n+1}, \quad n\in \mathbb N.
\end{equation}
\end{theorem}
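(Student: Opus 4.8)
The plan is to exhibit the solution explicitly through the Green function of the dichotomy, verify it lies in $\ell^\infty_{\beta,Z}$, check it solves \eqref{adm}, and then settle uniqueness separately. Write $Q_n:=\Id-P_n$. Given $\mathbf y=(y_n)_{n\in\bbN}\in\ell^1_{\beta,0}$, I would set
\[
x_n:=\sum_{k=1}^{n}\cA(n,k)P_k y_k-\sum_{k=n+1}^{\infty}\cA(n,k)Q_k y_k,\qquad n\in\bbN,
\]
where for $n\le k$ the operator $\cA(n,k)\colon\Ker P_k\to\Ker P_n$ is the one from the definition of $(\mu,\nu)$-dichotomy. The stable sum is finite; for the unstable one, \eqref{eq: Eu discrete} gives $\|\cA(n,k)Q_k\|\le D\nu_k(\mu_k/\mu_n)^{-\lambda}$, so its $k$-th term is bounded by $D\mu_n^{\lambda}\mu_k^{-\lambda-\beta}\big(\mu_k^{\beta}\nu_k\|y_k\|\big)$, and since $\beta>-(\lambda-\varepsilon)\ge-\lambda$ we have $\mu_k^{-\lambda-\beta}\le\mu_n^{-\lambda-\beta}$ for $k>n$; summability then follows from $\mathbf y\in\ell^1_{\beta,0}$. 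Taking $n=0$, each term of the unstable sum lies in the closed subspace $\Ker P_0$, so $x_0\in Z$.

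Next I would estimate $\mu_n^{\beta}\|x_n\|$. Using \eqref{eq: Es discrete}, $\mu_k\le\mu_n$ for $k\le n$, and $\beta<\lambda$,
\[
\Big\|\sum_{k=1}^{n}\cA(n,k)P_k y_k\Big\|\le D\mu_n^{-\lambda}\sum_{k=1}^{n}\mu_k^{\lambda-\beta}\big(\mu_k^{\beta}\nu_k\|y_k\|\big)\le D\mu_n^{-\beta}\|\mathbf y\|_{1,\beta},
\]
and the bound from the previous paragraph gives
\[
\Big\|\sum_{k=n+1}^{\infty}\cA(n,k)Q_k y_k\Big\|\le D\mu_n^{\lambda}\sum_{k=n+1}^{\infty}\mu_k^{-\lambda-\beta}\big(\mu_k^{\beta}\nu_k\|y_k\|\big)\le D\mu_n^{-\beta}\|\mathbf y\|_{1,\beta}.
\]
Hence $\sup_{n}\mu_n^{\beta}\|x_n\|\le 2D\|\mathbf y\|_{1,\beta}<+\infty$, so $\mathbf x\in\ell^\infty_{\beta,Z}$. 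That $\mathbf x$ solves \eqref{adm} is then a direct computation: writing $x_n=S_n-U_n$ for the two sums and using $A_n\cA(n,k)P_k=\cA(n+1,k)P_k$ (from \eqref{eq: Pn An commute}) together with $A_n\cA(n,k)Q_k=\cA(n+1,k)Q_k$ for $k>n$ (from the definition of $\cA$ on the kernels), one obtains $S_{n+1}=A_nS_n+P_{n+1}y_{n+1}$ and $A_nU_n=U_{n+1}+Q_{n+1}y_{n+1}$, whence $x_{n+1}-A_nx_n=(P_{n+1}+Q_{n+1})y_{n+1}=y_{n+1}$.

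For uniqueness, suppose $\mathbf x,\mathbf x'\in\ell^\infty_{\beta,Z}$ both solve \eqref{adm}, and put $\mathbf z=\mathbf x-\mathbf x'$, a solution of the homogeneous equation \eqref{LE} with $z_0\in\Ker P_0$ and $\|\mathbf z\|_{\infty,\beta}<+\infty$. Since $P_0z_0=0$ and $P_nz_n=\cA(n,0)P_0z_0$ by \eqref{eq: Pn An commute}, the stable component vanishes, so $z_n=Q_nz_n$ for all $n$, and therefore $z_n=\cA(n,m)Q_mz_m$ whenever $m\ge n$. Then \eqref{eq: Eu discrete} and $\nu_m\le C\mu_m^{\varepsilon}$, where $C:=\sup_n(\mu_n^{-\varepsilon}\nu_n)<+\infty$ by \eqref{munu}, yield
\[
\|z_n\|\le D\nu_m\Big(\frac{\mu_m}{\mu_n}\Big)^{-\lambda}\|z_m\|\le DC\,\mu_n^{\lambda}\,\mu_m^{\varepsilon-\lambda-\beta}\,\|\mathbf z\|_{\infty,\beta}.
\]
Since $\beta>-(\lambda-\varepsilon)$, the exponent $\varepsilon-\lambda-\beta$ is negative, so letting $m\to+\infty$ and invoking \eqref{eq: growth} forces $z_n=0$ for every $n$, i.e. $\mathbf x=\mathbf x'$.

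The step I expect to be most delicate is matching the two restrictions on $\beta$ to the two halves of the argument: the upper bound $\beta<\lambda$ is exactly what makes the stable series summable against the $\ell^1_{\beta}$-norm, while the lower bound $\beta>-(\lambda-\varepsilon)$, in tandem with \eqref{munu}, is precisely what makes the ``boundary-at-infinity'' term $\cA(n,m)Q_mz_m$ disappear in the uniqueness argument; note that the weaker condition $\beta>-\lambda$ would already suffice for existence. The remaining ingredients — convergence of the unstable series, the cocycle identities used in the verification, and the index bookkeeping — are routine.
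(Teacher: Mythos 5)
Your proposal is correct and follows essentially the same route as the paper: the same Green-function formula for $x_n$ (your sum starting at $k=1$ coincides with the paper's since $y_0=0$ for $\mathbf y\in\ell^1_{\beta,0}$), the same splitting of the weighted estimate using $\lambda-\beta>0$ and $\lambda+\beta>0$, the same telescoping verification of \eqref{adm}, and the same uniqueness argument via the unstable estimate \eqref{eq: Eu discrete} together with \eqref{munu} and $\beta>-(\lambda-\varepsilon)$. Your closing observation that $\beta>-\lambda$ already suffices for existence, with the $\varepsilon$-condition only needed for uniqueness, is also consistent with the paper's proof.
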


\begin{proof}
  Take $\beta \in (-(\lambda-\varepsilon), \lambda)$, $\mathbf y=(y_n)_{n\in \mathbb N}\in \ell^1_{\beta, 0}$  and set
\begin{equation}\label{xn}
x_n:=\sum_{k=0}^n \mathcal A(n, k)P_k y_k-\sum_{k=n+1}^\infty \mathcal A(n, k)(\Id-P_k)y_k, \quad n\in \mathbb N.
\end{equation}
Then,
\[
\begin{split}
\mu_n^\beta \|x_n\| &\le D\mu_n^\beta \sum_{k=0}^n \left (\frac{\mu_n}{\mu_k}\right )^{-\lambda}\nu_k \|y_k\|+D\mu_n^\beta \sum_{k=n+1}^\infty \left (\frac{\mu_k}{\mu_n}\right )^{-\lambda}\nu_k \|y_k\| \\
&\le D\sum_{k=0}^n \left (\frac{\mu_n}{\mu_k}\right )^{-(\lambda-\beta)}\mu_k^\beta \nu_k \|y_k\|+D\sum_{k=n+1}^\infty \left (\frac{\mu_k}{\mu_n}\right )^{-(\lambda+\beta)}\mu_k^\beta \nu_k \|y_k \|, \\
\end{split}
\]
which implies (since $\lambda-\beta>0$ and $\lambda+\beta>0$) that 
\[
\mu_n^{\beta} \|x_n\|\le D\sum_{k=0}^\infty \mu_k^\beta \nu_k \|y_k\|, \quad n\in \mathbb N.
\]
Note also that $x_0\in Z$ and, consequently, $\mathbf x=(x_n)_{n\in \mathbb N}\in \ell^\infty_{\beta, Z}$. Moreover, it is easy to verify that $\mathbf x$ satisfies ~\eqref{adm}. Indeed, given $n\in \bbN$,
\[
\begin{split}
 A_nx_n
   &= A_n\sum_{k=0}^{n}\cA(n,k)P_{k}y_{k} - A_n\sum_{k=n+1}^{\infty}\cA(n,k)(\Id -P_{k})y_{k}
      \\
  &= \sum_{k=0}^{n}\cA(n+1,k)P_{k}y_{k}- \sum_{k=n+1}^{\infty}\cA(n+1,k)(\Id -P_{k})y_{k}
      \\
  &= \sum_{k=0}^{n+1}\cA(n+1,k)P_{k}y_{k} - P_{n+1}y_{n+1}\\
  &\phantom{=} - \sum_{k=n+2}^{\infty}\cA(n+1,k)(\Id -P_{k})y_{k}  -(\Id -P_{n+1})y_{n+1}\\
   &= x_{n+1} -y_{n+1},
\end{split}
\]
as claimed. It remains to establish the uniqueness of such sequence $\mathbf x$. Suppose that $\tilde{\mathbf x}=(\tilde x_n)_{n\in \mathbb N}$ is another sequence in $\ell^\infty_{\beta, Z}$ that satisfies~\eqref{adm}. Then,
\[
x_n-\tilde x_n=\mathcal A(n, 0)(x_0-\tilde x_0), \quad n\in \mathbb N.
\]
Consequently, since $x_0-\tilde x_0\in Z=\Ker P_0$, condition \eqref{eq: Eu discrete} gives us that 
\[
\begin{split}
\mu_0^{-\lambda}\|x_0-\tilde x_0\| &\le D\mu_n^{-\lambda}\nu_n \|x_n-\tilde x_n\| \\
&\le D\mu_n^{-(\lambda+\beta)} \nu_n \|\mathbf x-\tilde{\mathbf x}\|_{\infty, \beta} \\
&\le D\mu_n^{-(\lambda+\beta-\varepsilon)}\|\mathbf x-\tilde{\mathbf x}\|_{\infty, \beta} \cdot \sup_{n\in \mathbb N}(\mu_n^{-\varepsilon}\nu_n),
\end{split}
\]
for $n\in \mathbb N$. Letting $n\to \infty$,  using~\eqref{munu} and $\beta>-(\lambda-\varepsilon)$
yields that $x_0=\tilde x_0$ and thus $\mathbf x=\tilde{\mathbf x}$. The proof of the theorem is completed. 
\end{proof}

We now establish the converse result.
\begin{theorem}\label{theo: adm to dich}
Suppose that there exist a closed subspace $Z\subset X$ and  $\beta>0$  such that:
\begin{enumerate}
\item for each $\mathbf y=(y_n)_{n\in \mathbb N} \in \ell^1_{\beta, 0}$, there exists a unique $\mathbf x=(x_n)_{n\in \mathbb N}\in \ell_{\beta, Z}^\infty$ such that~\eqref{adm} holds;
\item for each $\mathbf y=(y_n)_{n\in \mathbb N}\in \ell^1_{-\beta, 0}$, there exists a unique $\mathbf x=(x_n)_{n\in \mathbb N}\in \ell_{-\beta, Z}^\infty$ such that~\eqref{adm} holds.
\end{enumerate}
Then, $(A_n)_{n\in \mathbb N}$ admits a $(\mu,\nu)$-dichotomy. 
\end{theorem}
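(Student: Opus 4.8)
The strategy is the standard one for converse (admissibility $\Rightarrow$ dichotomy) results, adapted to the weighted $(\ell^1,\ell^\infty)$ setting: first use the admissibility hypotheses to build candidate stable and unstable subspaces at time $0$ together with their propagation along the dynamics, then extract the required norm estimates \eqref{eq: Es discrete}--\eqref{eq: Eu discrete} from an open mapping / closed graph argument. I would begin by observing that for each $j\in\{1,\infty\}$ and each weight parameter $\gamma\in\{\beta,-\beta\}$, the map sending $\mathbf y\in\ell^1_{\gamma,0}$ to the unique solution $\mathbf x\in\ell^\infty_{\gamma,Z}$ of \eqref{adm} is a well-defined linear operator $T_\gamma$; by the closed graph theorem (the graph is closed because convergence in either weighted norm forces pointwise convergence, and \eqref{adm} is a pointwise relation) $T_\gamma$ is bounded, say $\|T_\gamma\|\le K$. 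This boundedness is the engine behind all subsequent estimates.

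Next I would define the stable space at $0$ as
\[
S:=\Bigl\{\xi\in X:\ \text{the solution }x_n=\cA(n,0)\xi\text{ of \eqref{LE} satisfies }(x_n)_{n\in\bbN}\in\ell^\infty_{\beta}\Bigr\},
\]
and, using $Z$, the unstable space as a suitable complement; the key point is to show $X=S\oplus Z$ (so $Z$ plays the role of $\operatorname{Ker}P_0$ and $S$ of $\operatorname{Im}P_0$), which one does by solving \eqref{adm} for appropriate finitely-supported inputs $\mathbf y$ and reading off bounded/summable solutions — existence gives the splitting, uniqueness (in both weight regimes) gives that the sum is direct. Having the splitting, one defines $P_n$ on each fibre via $\cA(n,0)$ applied to $S$ and $Z$, checks \eqref{eq: Pn An commute} and the invertibility condition (2) (invertibility on $\operatorname{Ker}P_n$ will again follow from solvability of \eqref{adm} with inputs supported on $\{0,\dots,n\}$, as in \cite{DZZ,DZZ1}), and then one derives the quantitative bounds: feeding in $\mathbf y$ concentrated at a single index $k$ with a well-chosen vector, the bound $\|T_\gamma\|\le K$ translates — after the usual telescoping manipulation with the ratios $\mu_m/\mu_k$ — into \eqref{eq: Es discrete} with weight $\beta$ and into \eqref{eq: Eu discrete} with weight $-\beta$; the two weight regimes in hypotheses (1) and (2) are precisely what is needed to control the stable direction in one and the unstable direction in the other. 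Finally the nonuniform factor $\nu_n$ enters through the $\nu_n$-weight built into $\ell^1$, so no $\nu$-dependent Lyapunov norm is needed.

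The main obstacle I anticipate is the same as in the exponential case treated in \cite{DZZ,DZZ1}, compounded by the weights: establishing the direct-sum decomposition $X=S\oplus Z$ and the uniform boundedness of the associated projections $P_0$, and then bootstrapping the single-fibre norm estimates into the genuinely two-parameter bounds \eqref{eq: Es discrete}--\eqref{eq: Eu discrete} that involve $\mu_m/\mu_n$ for all pairs $m,n$ rather than just $m$ versus a fixed origin. Concretely, proving $\|\cA(m,n)P_n\|\le D\nu_n(\mu_m/\mu_n)^{-\lambda}$ for \emph{all} $m\ge n$ (not merely $n=0$) requires shifting the origin of the difference equation, which changes the ambient weighted spaces; one must verify that the admissibility hypotheses are inherited by the shifted systems with comparable constants, and this comparison — keeping track of how the constant $D$ and the exponent $\lambda$ depend on $\beta$, $K$, and the growth rate $\mu$ — is where the bulk of the technical work lies. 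I would structure the write-up so that this shift-invariance and the extraction of $\lambda$ (presumably $\lambda$ can be taken to be any number less than something like $\beta/2$, with $D$ absorbing the rest) are isolated as separate lemmas.
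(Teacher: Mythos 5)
Your overall strategy (closed graph theorem to bound the solution operators $T_{\pm\beta}$, single-index inputs to extract estimates, $Z$ as the unstable fibre at time $0$) is the right one, but there is a genuine structural gap in how you propagate the splitting: you propose to define the fibres at time $n$ by applying $\cA(n,0)$ to \emph{both} $S$ and $Z$. Since the operators $A_n$ are not assumed invertible (nor surjective), $\cA(n,0)S\oplus\cA(n,0)Z\subset\cA(n,0)X$ need not be all of $X$, so the projections $P_n$ are simply not defined on $X$. A concrete failure: $A_n\equiv 0$ admits a $(\mu,\nu)$-dichotomy with $P_n=\Id$ and satisfies the admissibility hypotheses with $Z=\{0\}$, yet $\cA(n,0)S=\{0\}$ for $n\ge1$ while the stable fibre at time $n$ must be all of $X$. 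The paper avoids this by defining the stable space \emph{intrinsically at each time}, $S(n)=\{v:\sup_{m\ge n}\|\cA(m,n)v\|<+\infty\}$, keeping only $U(n)=\cA(n,0)Z$ as a pushforward, and proving $X=S(n)\oplus U(n)$ for every $n$ by feeding the delta input supported at index $n$ into hypothesis (1) (with a separate, slightly different argument at $n=0$). A second ingredient your sketch does not secure is the cross-regime identification of solutions (Lemma~\ref{aux}): for $v\in S(n)$ the natural sequence $x_m=\cA(m,n)v$ ($m\ge n$), $x_m=0$ ($m<n$) is a priori only in $\ell^\infty_{-\beta,Z}$ (bounded sequences lie there since $\mu_m^{-\beta}\to 0$), and one must use the uniqueness in \emph{both} hypotheses to conclude it is the $\ell^\infty_{\beta,Z}$-solution before $\|T_\beta\|$ can be invoked to produce decay rather than a useless growth bound.

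Your anticipated "main obstacle" — shifting the origin of the equation and re-deriving admissibility for the shifted systems with comparable constants — is a red herring: no shifted systems are needed. The input concentrated at index $n$ has $\ell^1_{\pm\beta}$-norm $\mu_n^{\pm\beta}\nu_n\|v\|$ while the output is measured with the weight $\mu_m^{\pm\beta}$, so boundedness of the two fixed operators $T_{\beta}$ and $T_{-\beta}$ directly yields the two-parameter estimates \eqref{eq: Es discrete}--\eqref{eq: Eu discrete} with $\lambda=\beta$ exactly (not some $\lambda<\beta/2$). Finally, note that combining $\|P_n\|\le D\nu_n$ with the fibrewise estimates only gives bounds with $\nu_n^2$; the paper needs an additional step (applying $T_{\pm\beta}$ directly to the Green's-function sequence $m\mapsto\mathcal G(m,n)v$, after checking it lies in the correct weighted space) to reduce $\nu_n^2$ to $\nu_n$, a point your sketch does not address.
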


\begin{proof} We will split the proof into a series of auxiliary results and, in all of them, we assume that the hypotheses of Theorem \ref{theo: adm to dich} are satisfied even though we do not write it explicitly each time.

\begin{lemma}\label{aux}
Let $\mathbf y=(y_n)_{n\in \mathbb N}\in \ell^1_{\beta, 0}\cap \ell^1_{-\beta, 0}$, $\mathbf x^1=(x_n^1)_{n\in \mathbb N}\in \ell^\infty_{\beta, Z}$ and $\mathbf x^2=(x_n^2)_{n\in \mathbb N}\in \ell^\infty_{-\beta, Z}$ be such that both pairs $(\mathbf x^i, \mathbf y)$, $i\in \{1, 2\}$ satisfy~\eqref{adm}. Then, $x_n^1=x_n^2$ for every $n\in \mathbb N$. 
\end{lemma}
\begin{proof}[Proof of Lemma \ref{aux}]
    The result follows directly from the `uniqueness' in our assumptions and the simple observation that $\ell^\infty_{\beta, Z}\subset \ell^\infty_{-\beta, Z}$.
\end{proof}

Let us consider
\begin{equation}\label{Sn}
S(n):=\left\{v\in X: \sup_{m\ge n}\|\mathcal A(m, n)v\|<+\infty \right\}
\end{equation}
and 
\[
U(n)=\mathcal A(n, 0)Z.
\]
It is easy to check that 
\begin{equation}\label{eq: spliting is invariant}
A_nS(n)\subset S(n+1) \text{ and } A_nU(n)\subset U(n+1)
\end{equation}
 for every $n\in \bbN$.
\begin{lemma}\label{lem: spliting}
We have that 
\begin{equation}\label{split}
X=S(n)\oplus U(n), \quad n\in \mathbb N.
\end{equation}
\end{lemma}
\begin{proof}[Proof of Lemma \ref{lem: spliting}]
Take initially $n\ge 1$. Given $v\in X$, we define $\mathbf y=(y_m)_{m\in \mathbb N}$ by $y_n=v$ and $y_m=0$ for $m\neq n$. Then, $\mathbf y\in \ell^1_{\beta, 0}$. By our assumption, there exists $\mathbf x=(x_m)_{m\in \mathbb N}\in \ell^\infty_{\beta, Z}$ such that 
\[
x_{m+1}-A_m x_m=y_{m+1}, \quad m\in \mathbb N.
\]
In particular, 
\[
x_n-A_{n-1}x_{n-1}=v.
\]
Then, since $x_0\in Z$, we have that $A_{n-1}x_{n-1}=\mathcal A(n, 0)x_0\in U(n)$. Moreover, since $x_m=\mathcal A(m, n)x_n$ for $m\ge n$ and $\mathbf x\in \ell^\infty_{\beta, Z}$, we conclude that  $x_n\in S(n)$. Thus, $v\in S(n)+U(n)$.

Suppose now that $v\in S(n)\cap U(n)$. Then, there exists $w\in Z$ such that $v=\mathcal A(n, 0)w$. We define
\[
x_m:=\mathcal A(m, 0)w, \quad m\in \mathbb N.
\]
Then, $\mathbf x=(x_m)_{m\in \mathbb N}\in \ell^\infty_{-\beta, Z}$ and~\eqref{adm} holds with $\mathbf y=0$. Therefore, using the uniqueness in our assumption, we conclude that $\mathbf x=0$ and $v=0$ proving that~\eqref{split} holds  for every $n\geq 1$.

Let us now consider the case when $n=0$. Given $v\in X$, we define  sequences 
$\mathbf x=(x_m)_{m\in \mathbb N}$ and $\mathbf y=(y_m)_{m\in \mathbb N}$ given by $x_0=v$ and $x_m=0$ for $m\neq n$ and $y_{1}=-A_0v$ and $y_m=0$ for $m\neq 1$. Then,
\[x_{m+1}-A_mx_m=y_{m+1}, \quad m\in \bbN.\]
Moreover, since $\mathbf y \in \ell^1_{\beta, 0}$, there exists $\tilde{\mathbf x}=(\tilde x_m)_{m\in \bbN}\in \ell^\infty_{\beta,Z}$ such that \eqref{adm} holds. Then,
\[x_m-\tilde x_m=\mathcal A (m,0)(v-\tilde x_0), \quad \text{ for all }m\in \bbN.\]
Consequently, since $\mathbf x-\tilde{\mathbf x}\in \ell^\infty_{\beta}$, it follows that $v-\tilde x_0\in S(0)$. Therefore, since $\tilde x_0\in Z$, we conclude that $v\in S(0)+U(0)$. Finally, consider $v\in S(0)\cap U(0)$ and let $x_m=\mathcal A(m,0)v$ for $m\in \bbN$.
Then, as in the case when $n\geq 1$, $\mathbf x=(x_m)_{m\in \mathbb N}\in \ell^\infty_{-\beta, Z}$ and~\eqref{adm} holds with $\mathbf y=0$. Therefore, using the uniqueness in our assumption, we conclude that $\mathbf x=0$ and $v=0$. This completes the proof of the lemma.
\end{proof}

\begin{lemma}\label{lem: isom}
     For every $n\in \bbN$, the operator $A_n|_{U(n)}\colon U(n)\to U(n+1)$ is an isomorphism.
\end{lemma}
\begin{proof}[Proof of Lemma \ref{lem: isom}]
Fix $n\in \bbN$. Let us start proving that $A_n|_{U(n)}$ is injective. Suppose there exists $v\in U(n)$ such that $A_nv=0$. By the definition of $U(n)$ there exists $z_0\in Z$ such that $v=\mathcal A(n,0)z_0$. Then, the sequence $(x_m)_{m\in \bbN}$ given by 
\[x_m=\mathcal A(m,0)z_0,\quad m\in \bbN,\]
belongs to $\ell^\infty_{\beta, Z}$ and satisfies \eqref{adm} with $\mathbf y=0$. Consequently, by the uniqueness in our hypothesis, it follows that $x_m=0$ for every $m\in \bbN$. In particular, $v=0$ and $A_n|_{U(n)}$ is injective.

Now, given $x\in U(n+1)$, let $z_0\in Z$ be such that $x=\mathcal A(n+1,0)z_{0}$. Set $x':=\mathcal A(n, 0)z_0$. Then, it follows directly from the definition that $x'\in U(n)$ and $A_nx'=x$ which proves that  $A_n|_{U(n)}\colon U(n)\to U(n+1)$ is surjective. Consequently,  $A_n|_{U(n)}\colon U(n)\to U(n+1)$ is an isomorphism as claimed.
\end{proof}

\begin{lemma}\label{lem: projections are bounded}
For each $n\in \bbN$, let $P_n\colon X\to S(n)$ be the projection associated with~\eqref{split}. There exists $D>0$ such that 
\begin{equation}\label{probound}
\sup_{n\in \mathbb N}\|P_n\| \le D\nu_n, \quad n\in \mathbb N.
\end{equation}
\end{lemma}
\begin{proof}[Proof of Lemma \ref{lem: projections are bounded}]
    Let us consider  the map $T_\beta \colon \ell^1_{\beta, 0}\to \ell^\infty_{\beta, Z}$ given by $T_\beta (\mathbf y)=\mathbf x$ where $\mathbf x$ is the unique element in $l^\infty_{\beta,Z}$ such that~\eqref{adm} holds. Then, one can easily check that $T_\beta $ is a linear operator. Moreover, we observe that $T_\beta$ is a closed operator. In fact, let $\mathbf y^k=(y^k_n)_{n\in \bbN}$ be a sequence of elements in $\ell^1_{\beta,Z_0}$ such that $\mathbf y^k\to \mathbf y$ for some $\mathbf y =(y_n)_{n\in \bbN}\in \ell^1_{\beta,Z_0}$ and $T_\beta (\mathbf y^k)\to \mathbf x$ for some $\mathbf x=(x_n)_{n\in \bbN}\in \ell^\infty_{\beta,Z}$. Then, writing $T_\beta (\mathbf y^k)=\mathbf x^k=(x^k_n)_{n\in \bbN}$ we get that
    \[x_{n+1}^k-A_nx^k_n=y^k_{n+1}, \text{ for every } n,k\in \bbN. \]
    Thus, letting $k\to +\infty$ we conclude that
    \[x_{n+1}-A_nx_n=y_{n+1},  \text{ for every } n\in \bbN \]
    which implies that $T_\beta (\mathbf y)=\mathbf x$. In particular, $T_\beta $ is a closed operator and thus, according to the Closed Graph Theorem (see, e.g., \cite[Theorem~4.2-I, p.~181]{Taylor}), $T_\beta $ is bounded.

Now, given $n\ge 1$ and $v\in X$, consider $\mathbf y=(y_m)_{m\in \mathbb N}$ given by $y_n=v$ and $y_m=0$ for $m\neq n$. Then, $\mathbf y\in \ell^1_{\beta, 0}$. Considering $T_\beta (\mathbf y)=\mathbf x=(x_n)_{n\in \bbN}$, it follows by the proof of Lemma \ref{lem: spliting} that $P_nv=x_n$. Thus,
\[\mu_n^\beta \|P_nv\|=\mu_n^\beta\|x_n\|\leq \|\mathbf x\|_{\infty,\beta}= \|T_\beta (\mathbf y)\|_{\infty,\beta} \leq \|T_\beta \| \cdot \|\mathbf y\|_{1,\beta}=\|T_\beta \|\mu_n^\beta \nu_n \|v\|.\]
This yields that~\eqref{probound} holds with $D=\|T_\beta \|$ for $n\ge 1$. On the other hand, \eqref{probound} holds with $D=\|P_0\|$ for $n=0$ (recall that $\nu_0\geq 1$). We conclude that~\eqref{probound} holds with 
\[
D=\max \left \{ \|T_\beta \|, \|P_0\| \right \}>0.
\]
\end{proof}

\begin{lemma}\label{lem: bounds stable direction}
There exists $C>0$ such that 
\[
\|\mathcal A(m, n)v\| \le C\left (\frac{\mu_m}{\mu_n}\right )^{-\beta} \nu_n \|v\|,\]
for every $m\ge n$ and $v\in S(n)$.
\end{lemma}

\begin{proof}[Proof of Lemma \ref{lem: bounds stable direction}]
Let $n\ge 1$ and $v\in S(n)$. We define  sequences $(y_m)_{m\in \mathbb N}$ and $(x_m)_{m\in \mathbb N}$ by 
\[
y_m=\begin{cases}
v & m=n; \\
0 & m\neq n,
\end{cases} \quad \text{and} \quad 
x_m=\begin{cases}
\mathcal A(m, n)v & m\ge n; \\
0& m<n.
\end{cases}
\]
Then, $\mathbf y=(y_m)_{m\in \mathbb N}\in \ell^1_\beta\cap \ell^1_{-\beta}$ and $\mathbf x=(x_m)_{m\in \mathbb Z}\in \ell^\infty_{-\beta, Z}$. Moreover, \eqref{adm} holds. Then, from Lemma~\ref{aux} it follows that $\mathbf x\in \ell^\infty_{\beta, Z}$. Thus, letting $T_\beta \colon \ell^1_{\beta,0}\to \ell^\infty_{\beta, Z}$ be the linear operator defined in the proof of Lemma \ref{lem: projections are bounded} and considering $K=\|T_\beta \|+1$, we get that
\[
\mu_m^\beta \|x_m\| \le \| \mathbf x\|_{\infty, \beta}= \|T_\beta (\mathbf y)\|_{\infty, \beta}\le K\|y\|_{1, \beta}=K\mu_n^\beta \nu_n \|v\|
\]
for $m\ge n$. Thus,
\[
\|\mathcal A(m, n)v\| \le K\left (\frac{\mu_m}{\mu_n}\right )^{-\beta}\nu_n \|v\|
\]
for every $m\ge n$ and $n\geq 1$. 

Now, for the case when $n=0$, if $m=n=0$ then the same estimate as before holds since $K\ge 1$. Take $m\geq 1$. Then, given $v\in S(0)$, by \eqref{eq: spliting is invariant} we have that $A_0v\in S(1)$. Thus, using the estimate obtained above we get that
\[\begin{split}
\|\mathcal A(m, 0)v\|&=\|\mathcal A(m,1)A_0v \| \le K\left (\frac{\mu_m}{\mu_1}\right )^{-\beta} \nu_1 \|A_0v\| \\
&\leq K\|A_0\| \frac{\nu_1}{\nu_0}
\left (\frac{\mu_0}{\mu_1}\right )^{-\beta} \left (\frac{\mu_m}{\mu_0}\right )^{-\beta}\nu_0 \|v\| .
\end{split}
\]
Therefore, taking $C=\max\{K,K\nu_1 \nu_0^{-1}\|A_0\|\mu_0^{-\beta}\mu_1^{\beta}\}$ we get the desired result.
\end{proof}

\begin{lemma}\label{lem: bounds unstable direction}
There exists $\tilde C>0$ such that 
\[
\|\mathcal A(m, n)v\| \le \tilde C \left (\frac{\mu_n}{\mu_m}\right )^{-\beta}\nu_n \|v\|,\]
for every $m\leq  n$ and $v\in U(n)$.
\end{lemma}

\begin{proof}[Proof of Lemma \ref{lem: bounds unstable direction}]     
Take $n\ge 1$ and $v\in U(n)$. Then by Lemma \ref{lem: isom} we may consider 
\[
x_m=\begin{cases}
\mathcal A(m, n)v & m<n;\\
0 &m\ge n,
\end{cases} \quad \text{and} \quad y_m=\begin{cases}
 -v & m=n; \\
0 &m\neq n.
\end{cases}
\]
Note that  $\mathbf x=(x_m)_{m\in \mathbb N}\in \ell^\infty_{-\beta, Z}$ and $\mathbf y=(y_m)_{m\in \mathbb N}\in \ell^1_{-\beta, 0}$. Furthermore, \eqref{adm} holds. Let us now consider the map $ T_{-\beta}\colon \ell^1_{-\beta,Z_0}\to \ell^\infty_{-\beta, Z}$ given by $T_{-\beta}(\mathbf y)=\mathbf x$ where $\mathbf x$ is the unique element in $\ell^\infty_{-\beta,Z}$ associated to $\mathbf y$ by our assumption (so that~\eqref{adm} holds). Then, proceeding as in the proof of Lemma \ref{lem: projections are bounded} we can prove that $ T_{-\beta }$ is a bounded linear operator. Thus, taking $\tilde C=\| T_{-\beta }\|+1$ we get that
\[
\mu_m^{-\beta}\| x_m\| \leq \| \mathbf x\|_{\infty, -\beta}= \| T_{-\beta}(\mathbf y)\|_{\infty, -\beta}\le \tilde C\|y\|_{1, -\beta}=\tilde C\mu_n^{-\beta} \nu_n \|v\|,
\]
for every $m<n$. This easily implies that 
\[
\| \mathcal A(m,n)v\| \le \tilde C \left (\frac{\mu_n}{\mu_m}\right )^{-\beta}\nu_n \|v\|,
\]
for every $m<n$ as claimed. Finally, the cases when $m=n$ or $n=0$ trivially holds since $\tilde C \geq 1$.
\end{proof}
For $m, n\in \mathbb N$, set
\[
\mathcal G(m, n):=\begin{cases}
\mathcal A(m, n)P_n & m\ge n; \\
-\mathcal A(m, n)(\Id-P_n) &m<n.
\end{cases}
\]
Then, by Lemmas~\ref{lem: projections are bounded}, \ref{lem: bounds stable direction} and~\ref{lem: bounds unstable direction} there exists $C'>0$ such that 
\begin{equation}\label{G}
\|\mathcal G(m, n)\| \le C'\begin{cases}
\left (\frac{\mu_m}{\mu_n}\right )^{-\beta}\nu_n^2 & m\ge n; \\
\left (\frac{\mu_n}{\mu_m}\right )^{-\beta}\nu_n^2
& m<n.
\end{cases}
\end{equation}
The purpose of the following lemma is to replace $\nu_n^2$ by $\nu_n$ in~\eqref{G}.
\begin{lemma}\label{Green}
There exists $C''>0$ such that 
\[
\|\mathcal G(m, n)\| \le C''\begin{cases}
\left (\frac{\mu_m}{\mu_n}\right )^{-\beta}\nu_n & m\ge n; \\
\left (\frac{\mu_n}{\mu_m}\right )^{-\beta}\nu_n
& m<n.
\end{cases}
\]
\end{lemma}
\begin{proof}[Proof of the Lemma~\ref{Green}]
Take $n\ge 1$ and $v\in X$. We define a sequence $\mathbf y=(y_m)_{m\in \mathbb N}$ by $y_n=v$ and $y_m=0$ for $m\neq n$. Then, $\mathbf y\in \ell_{\beta, 0}^1\cap \ell_{-\beta, 0}^1$. Set
\[
x_m:=\mathcal G(m, n)v, \quad m\in \mathbb N.
\]
It is easy to verify that~\eqref{adm} holds.
We claim that $\mathbf x=(x_m)_{m\in \mathbb N}\in \ell_{\beta, Z}^\infty$. Indeed, for $m\ge n$ we have that 
\[
\|x_m\|=\|\mathcal G(m, n)v\|\le C'\left (\frac{\mu_m}{\mu_n}\right )^{-\beta}\nu_n^2\|v\|,
\]
which implies that 
\[
\sup_{m\in \mathbb N}(\mu_m^\beta \|x_m\|)<+\infty.
\]
Consequently, $\mathbf x=(x_m)_{m\in \mathbb N}\in \ell_{\beta, Z}^\infty$ and $T_{\beta}(\mathbf y)=\mathbf x$, where $T_\beta$ is as in the proof of Lemma~\ref{lem: projections are bounded}. Therefore, for $m\ge n$ we have that 
\[
\mu_m^\beta \|\mathcal G(m, n)v\|=\mu_m^\beta \|x_m\|\le \|\mathbf x\|_{\infty, \beta} \le \|T_\beta\| \cdot \|\mathbf y\|_{1, \beta}=\|T_\beta\| \mu_n^\beta \nu_n \|v\|,
\]
and thus
\[
\| \mathcal G(m, n)\| \le \|T_\beta \| \left(\frac{\mu_m}{\mu_n}\right)^{-\beta}\nu_n, \quad m\ge n\ge 1.
\]
We now consider the case when $n=0$. For $m>0$ we have 
\[
\begin{split}
\|\mathcal G(m, 0)\|=\|\mathcal G(m, 1)A_0\| &\le \|T_\beta \| \cdot \|A_0\| \left (\frac{\mu_m}{\mu_1}\right )^{-\beta}\nu_1  \\
&=\|T_\beta \| \cdot \|A_0\|\frac{\nu_1}{\nu_0}\left (\frac{\mu_0}{\mu_1}\right )^{-\beta} \left (\frac{\mu_m}{\mu_0}\right )^{-\beta}\nu_0.
\end{split}
\]
Hence, taking
\[C''\geq \max \left \{ \|T_\beta \|, \|T_\beta \| \cdot \|A_0\|\frac{\nu_1}{\nu_0}\left (\frac{\mu_0}{\mu_1}\right )^{-\beta}, \|P_0\| \right \}\]
we get that 
\[
\| \mathcal G(m, n)\|\leq C'' \left(\frac{\mu_m}{\mu_n}\right)^{-\beta}\nu_n, 
\]
for $m\ge n$. Similarly, one can treat the case when $m<n$.
\end{proof}

Now the proof of Theorem \ref{theo: adm to dich} can be easily completed by combining the previous auxiliary results. 
\end{proof}

\subsection{An alternative characterization of $(\mu,\nu)$-dichotomy for compact operators}
We now present an alternative to Theorem~\ref{theo: adm to dich} in the case when at least one of the operators $A_n$ is compact. For this purpose, given $\beta \in \mathbb R$, let 
\[
S_\beta(0)=\left \{ v\in X: \ \sup_{n\in \mathbb N}(\mu_n^\beta \|\cA(n, 0)v\|)<+\infty  \right \}.
\]
Note that $S_\beta(0)$ is a subspace of $X$.
\begin{theorem}\label{new}
Suppose that $A_n$ is compact \emph{for some} $n\in \mathbb N$ and that  there exists   $\beta>0$  such that:
\begin{enumerate}
\item $S_0(0)=S_\beta(0)$;
\item for each $\mathbf y=(y_n)_{n\in \mathbb N} \in \ell^1_{\beta, 0}$, there exists  $\mathbf x=(x_n)_{n\in \mathbb N}\in \ell_{\beta}^\infty$ such that~\eqref{adm} holds;
\item for each $\mathbf y=(y_n)_{n\in \mathbb N}\in \ell^1_{-\beta, 0}$, there exists  $\mathbf x=(x_n)_{n\in \mathbb N}\in \ell_{-\beta}^\infty$ such that~\eqref{adm} holds.
\end{enumerate}
Then, $(A_n)_{n\in \mathbb N}$ admits a $(\mu,\nu)$-dichotomy. 
\end{theorem}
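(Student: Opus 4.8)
The plan is to derive the dichotomy from Theorem~\ref{theo: adm to dich}: I will exhibit a closed subspace $Z\subset X$ for which the two admissibility‑and‑uniqueness conditions of that theorem hold, and then simply apply it. Write $S(0):=S_0(0)$, which by hypothesis~(1) coincides with $S_\beta(0)$, fix $n_0$ with $A_{n_0}$ compact, and set $N:=n_0+1$, so that $\cA(N,0)=\cA(N,n_0+1)\,A_{n_0}\,\cA(n_0,0)$ is a compact operator. The subspace $Z$ will be a finite‑dimensional complement of $S(0)$; the whole point is to show that $S(0)$ is closed and of finite codimension, and that in addition $S_{-\beta}(0)=S(0)$ (equivalently $S_{-\beta}(0)=S_\beta(0)$), so that one and the same $Z$ complements $S_\beta(0)$ and $S_{-\beta}(0)$ simultaneously.

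First I would turn the existence assumptions~(2) and~(3) into uniform bounds. Since $I=\bbN$, a homogeneous solution is determined by its value at $0$, so the homogeneous solutions belonging to $\ell^\infty_\beta$ form the closed subspace $\mathcal H_\beta=\{(\cA(n,0)v)_n:v\in S_\beta(0)\}$ of $\ell^\infty_\beta$ (closedness is immediate from continuity of the $\cA(n,0)$), and similarly for $-\beta$. The assignment $\mathbf y\mapsto[\mathbf x]$, where $\mathbf x$ is any solution of~\eqref{adm}, is a well‑defined linear bijection $\ell^1_{\beta,0}\to\ell^\infty_\beta/\mathcal H_\beta$ (onto by~(2); one‑to‑one because $\mathbf y$ is recovered from $\mathbf x$), and the closed‑graph argument used for Lemma~\ref{lem: projections are bounded}, together with the open mapping theorem, makes it a Banach‑space isomorphism. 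Hence every $\mathbf y\in\ell^1_{\beta,0}$ admits a solution $\mathbf x\in\ell^\infty_\beta$ of~\eqref{adm} with $\|\mathbf x\|_{\infty,\beta}\le K\|\mathbf y\|_{1,\beta}$, $K$ independent of $\mathbf y$, and likewise for $-\beta$.

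Next, and this is the step I expect to be the hard part, I would use compactness to obtain the structural information about $S(0)$. Running the first part of the proof of Lemma~\ref{lem: spliting} at time $N$ with the bounded solution from the previous step, every $v\in X$ can be written $v=x_N-\cA(N,0)x_0$ with $x_N\in S_\beta(N)$ and $\sup_{m\ge N}\mu_m^\beta\|\cA(m,N)x_N\|\le C\|v\|$ and $\|x_0\|\le C\|v\|$. Thus the closed unit ball of $X$ is contained in $C'F+\cA(N,0)(\overline{B_X(0,C)})$, where $F=\{s\in S_\beta(N):\sup_{m\ge N}\mu_m^\beta\|\cA(m,N)s\|\le1\}\subset S_\beta(N)$; since $\cA(N,0)$ is compact the second summand is relatively compact, so modulo $\overline{S_\beta(N)}$ the unit ball of $X$ is relatively compact, and therefore $X/\overline{S_\beta(N)}$ is finite‑dimensional. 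A further argument — using the bounded decomposition once more together with hypothesis~(1) — shows that $S(0)=S_\beta(0)$ is in fact closed; being also of finite codimension (a vector lies in $S_\beta(0)$ iff its $\cA(N,0)$‑image lies in $S_\beta(N)$, and $X/\overline{S_\beta(N)}$ is finite‑dimensional), it admits a finite‑dimensional complement. An analogous argument based on~(3) instead of~(2), combined with the coincidence $S_0(0)=S_\beta(0)$, yields $S_{-\beta}(0)=S(0)$.

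Finally, fix a finite‑dimensional (hence closed) complement $Z$ of $S(0)$, so that $X=S(0)\oplus Z$. Since $S_\beta(0)=S(0)$ we have $S_\beta(0)\cap Z=\{0\}$; given $\mathbf y\in\ell^1_{\beta,0}$, one takes the bounded solution $\mathbf x^0\in\ell^\infty_\beta$ and subtracts from it the homogeneous solution whose value at $0$ is the $S(0)$‑component of $x^0_0$: this correction stays in $\ell^\infty_\beta$ (because $S(0)=S_\beta(0)$) and moves the value at $0$ into $Z$, producing a solution in $\ell^\infty_{\beta,Z}$, which is unique since $S_\beta(0)\cap Z=\{0\}$. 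Using $S_{-\beta}(0)=S(0)$, the same reasoning applies with $-\beta$ in place of $\beta$. Both hypotheses of Theorem~\ref{theo: adm to dich} are therefore verified, and it delivers the $(\mu,\nu)$‑dichotomy. The only real difficulty is the third paragraph — extracting closedness, finite codimension and the identity $S_{-\beta}(0)=S(0)$ from compactness of a single operator together with the mere existence (not uniqueness) of admissible solutions; the rest is a routine reworking of arguments already developed for Theorems~\ref{theo: from dich to adm} and~\ref{theo: adm to dich}.
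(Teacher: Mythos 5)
Your overall plan (produce a closed complement $Z$ of $S(0)=S_0(0)$, correct the solutions from hypotheses (2)--(3) by subtracting the homogeneous solution whose initial value is the $S(0)$-component, and then invoke Theorem~\ref{theo: adm to dich}) is exactly the paper's, and your last paragraph coincides with the paper's final step. The genuine gap is precisely at the point you yourself flag as ``the hard part'': you never prove that $S(0)$ is closed and complemented. From the inclusion of the unit ball of $X$ in $C'F+\cA(N,0)\bigl(\overline{B}_X(0,C)\bigr)$ you do get that $X/\overline{S_\beta(N)}$ is finite dimensional, and from $S_\beta(0)=\cA(N,0)^{-1}(S_\beta(N))$ you get that $S_\beta(0)$ is \emph{contained in} the closed, finite-codimensional subspace $\cA(N,0)^{-1}\bigl(\overline{S_\beta(N)}\bigr)$; but neither fact yields that $S_\beta(0)$ itself is closed or of finite codimension. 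An inclusion $B_X\subset M+K$ with $K$ compact does not force a subspace $M$ to be closed (a dense kernel of a discontinuous functional can satisfy it), so the ``further argument'' you defer to is not a routine supplement --- it is the crux of the theorem. The paper closes exactly this point by verifying the hypotheses of Sch\"affer's lemma \cite[Lemma 3.4]{Sch}: the covariance $A_n^{-1}(S(n+1))=S(n)$, the surjectivity $X=\cA(m,n)X+S(m)$ for $m\ge n$ (obtained from hypothesis (2) as in Lemma~\ref{lem: spliting}), the fact that each $S(m)$ is the image of a Banach space under a bounded operator, and compactness of one $A_n$; that lemma then delivers closedness and complementedness of $S(0)$. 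If you want to avoid citing Sch\"affer, you must supply an argument of comparable depth, which your sketch does not contain.

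Two further points. The identity $S_{-\beta}(0)=S(0)$, which you use to get uniqueness in $\ell^\infty_{-\beta,Z}$, is likewise only asserted (``an analogous argument yields''); note that the paper itself only needs, and only claims, the elementary observation that orbits of points of $S(0)$ lie in $\ell^\infty_{-\beta}$ (since $\mu_n^{-\beta}\le\mu_0^{-\beta}$ for $n\ge 0$), so if you insist on the stronger identity you must prove it. Finally, a repairable slip in your second paragraph: the map $\mathbf y\mapsto[\mathbf x]$ from $\ell^1_{\beta,0}$ to $\ell^\infty_\beta/\mathcal H_\beta$ is injective but not onto (an arbitrary class in $\ell^\infty_\beta$ need not solve \eqref{adm} with an input in $\ell^1_{\beta}$), so the open mapping theorem does not apply as stated; the uniform bound you want should instead be extracted by a closed-graph argument for the quotient-valued map, in the spirit of Lemma~\ref{lem: projections are bounded}. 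In the paper's route this uniform bound is not needed at all.
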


\begin{proof}
For $n\in \mathbb N$, let $S(n)$ be given by~\eqref{Sn}. We first claim that
\[
A_n^{-1}(S(n+1))=S(n), \quad  n\in \mathbb N.
\]
To this end, take $v\in A_n^{-1}(S(n+1))$. Then, $A_n v\in S(n+1)$ and thus 
\[
\sup_{m\ge n+1}\| \cA(m, n+1)A_n v\|=\sup_{m\ge n+1}\|\cA(m, n)v\|<+\infty,
\]
which implies that $\sup_{m\ge n}\|\cA(m, n)v\|<+\infty$, i.e. $v\in S(n)$. The converse inclusion can be established in a similar manner.

Next, we claim that for $m\ge n$,
\[
X=\cA(m, n)X+S(m).
\]
One can easily see that it is sufficient to consider the case when $n=0$. Then, if $m=0$, there is nothing to show since $\cA(m, n)X=X$. Let us now consider the case when $m>0$. Take $v\in X$ and define $\mathbf y=(y_k)_{k\in \mathbb N}$ by $y_m=v$ and $y_k=0$ for $k\neq m$. Then, $\mathbf y\in \ell_{\beta, 0}^1$. Consequently, there exists $\mathbf x=(x_k)_{k\in \mathbb N}\in \ell_\beta^\infty$ such that~\eqref{adm} holds. In particular,
\[
x_m-A_{m-1}x_{m-1}=v 
\]
and 
\[
x_k=A_{k-1}x_{k-1}, \quad \text{for } k\neq m.
\]
Hence, since $\mathbf x\in \ell_\beta^\infty$, we get that $x_m\in S(m)$. On the other hand, $A_{m-1}x_{m-1}=\cA(m, 0)x_0$, yielding that 
\[
v=x_m-A_{m-1}x_{m-1}\in S(m) + \cA(m, 0)X. 
\]
Thus, the desired claim holds.

We proceed by noting that each $S(m)$ is an image of a Banach space under the action of a bounded linear operator. Indeed, let $\mathcal C(m)$ denote the space of all sequences $\mathbf x=(x_n)_{n\ge m}\subset X$ such that 
\[
\|\mathbf x\|_{\mathcal C(m)}:=\sup_{n\ge m} \|x_n\|<+\infty.
\]
Then, $(\mathcal C(m), \| \cdot \|_{\mathcal C(m)})$ is a Banach space. Set $\mathcal C'(m)$ to be the set of all sequences $\mathbf x=(x_n)_{n\ge m}\in \mathcal C(m)$ satisfying
\[
x_{n+1}=A_n x_n, \quad n\ge m.
\]
It is straightforward to verify that $\mathcal C'(m)$ is a closed subspace of $\mathcal C(m)$ and therefore also a Banach space. Then, we observe that $S(m)=\Phi(\mathcal C'(m))$, where $\Phi \colon \mathcal C(m)\to X$ is a bounded linear operator given by 
\[
\Phi(\mathbf x)=x_m, \quad \mathbf x=(x_n)_{n\ge m}\in \mathcal C(m).
\]
It follows now from~\cite[Lemma 3.4.]{Sch} that $S(0)=S_0(0)$ is closed and complemented in $X$. Therefore, there exists a closed subspace $Z\subset X$ such that
\[
X=S(0)\oplus Z.
\]
Take an arbitrary $\mathbf y=(y_n)_{n\in \mathbb Z}\in \ell_{\beta, 0}^\infty$. By our assumption, there exists $\mathbf x=(x_n)_{n\in \mathbb N}\in \ell_{\beta}^\infty$ such that~\eqref{adm} holds. Write $x_0=v_1+v_2$ with $v_1\in S(0)$ and $v_2\in Z$. Set
\[
\tilde x_n:=x_n-\cA(n, 0)v_1.
\]
Since $v_1\in S(0)=S_0(0)=S_\beta(0)$, we easily get that $\tilde{\mathbf x}=(\tilde x_n)_{n\in \mathbb N}\in \ell_\beta^\infty$. Moreover, $\tilde x_0=v_2\in Z$. Therefore, $\tilde{\mathbf x}\in \ell_{\beta, Z}^\infty$ and it is straightforward to verify that the pair $(\tilde{\mathbf x}, \mathbf y)$ satisfies~\eqref{adm}. In addition, let us suppose that the pair $(\overline{\mathbf x}, \mathbf y)$ satisfies~\eqref{adm} with $\overline{\mathbf x}=(\overline{x}_n)_{n\in \mathbb N}\in \ell_{\beta, Z}^\infty$. Then, we have that $\overline{x}_0-\tilde x_0\in S(0)\cap Z$, which gives that $\overline{x}_0-\tilde x_0=0$. Therefore, $\overline{\mathbf x}=\tilde{\mathbf x}$. We have thus proved that the first assumption in the statement of Theorem~\ref{theo: adm to dich} holds. Similarly, one can establish the second assumption in Theorem~\ref{theo: adm to dich} (for this it is sufficient to observe that  for $v\in S(0)$, $n\mapsto \cA(n, 0)v\in \ell_{-\beta}^\infty$).  The desired conclusion now follows from Theorem~\ref{theo: adm to dich}.
\end{proof}
\begin{remark}
\begin{itemize}
\item It is easy to verify that under the assumptions of Theorem~\ref{theo: from dich to adm} we have that $S_0(0)=S_\beta(0)$, making this  a reasonable assumption in the statement of Theorem~\ref{new};
\item in contrast to Theorem~\ref{theo: adm to dich}, the admissibility assumptions in Theorem~\ref{new} do not require uniqueness.
\end{itemize}
\end{remark}

\subsection{Persistence of $(\mu,\nu)$-dichotomy} In this section, as a consequence of the characterization of $(\mu,\nu)$-dichotomy given in Section \ref{sec: charct one sided}, we are going to show that the notion of $(\mu,\nu)$-dichotomy persists under small linear perturbations. 

\begin{theorem}\label{theo: persistance unif bound}
Let $(\gamma_n)_{n\in \bbN}$ be a sequence of numbers with $\gamma_n>0$ for every $n\in \bbN$ such that 
\begin{equation}\label{eq: gamman is summable}
    \sum_{n=0}^{\infty}\gamma_n<+\infty.
\end{equation}
Suppose that \eqref{LE} admits a $(\mu,\nu)$-dichotomy with $\lambda >0$ and condition \eqref{munu} is satisfied and take $\beta \in (0,\lambda-\varepsilon)$. Moreover, let $(B_n)_{n\in \bbN}$ be a sequence of operators in $\mathcal{B}(X)$ with the property that there exists $c>0$ such that 
    \begin{equation}\label{eq: unif condition on Bn}
        \|B_n\|\le \frac{c\gamma_n\mu_{n}^\beta}{\nu_{n+1}\mu_{n+1}^\beta}, \quad \text{for $n\in \mathbb N$.}
    \end{equation}
 Then, if $c$ is sufficiently  small  we have that the nonautonomous difference equation
    \begin{equation}\label{eq: perturbed eq}
        x_{n+1}=(A_n+B_n)x_n, \quad n\in \bbN
    \end{equation}
also admits a $(\mu,\nu)$-dichotomy.
\end{theorem}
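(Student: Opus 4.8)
The plan is to deduce the result from the characterization of $(\mu,\nu)$-dichotomy obtained in Section~\ref{sec: charct one sided}: we will check that the perturbed equation~\eqref{eq: perturbed eq} satisfies the admissibility hypotheses of Theorem~\ref{theo: adm to dich}, and the conclusion follows at once. Fix $\beta\in(0,\lambda-\varepsilon)$ as in the statement and observe that \emph{both} $\beta$ and $-\beta$ lie in $(-(\lambda-\varepsilon),\lambda)$; this is exactly why the hypothesis restricts $\beta$ to this interval. Hence Theorem~\ref{theo: from dich to adm} applies to~\eqref{LE} with each of the exponents $\beta$ and $-\beta$ and, writing $Z:=\Ker P_0$, produces solution operators $T_\beta\colon\ell^1_{\beta,0}\to\ell^\infty_{\beta,Z}$ and $T_{-\beta}\colon\ell^1_{-\beta,0}\to\ell^\infty_{-\beta,Z}$, each assigning to $\mathbf y$ the unique $\mathbf x$ with $x_{n+1}-A_nx_n=y_{n+1}$; by the Closed Graph Theorem argument used in the proof of Lemma~\ref{lem: projections are bounded}, $T_\beta$ and $T_{-\beta}$ are bounded.

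Next I would encode the perturbation as an operator. For a sequence $\mathbf x=(x_n)_{n\in\bbN}$, put $(\cB\mathbf x)_0:=0$ and $(\cB\mathbf x)_n:=B_{n-1}x_{n-1}$ for $n\ge1$. Using~\eqref{eq: unif condition on Bn} with index $n-1$, for $\mathbf x\in\ell^\infty_{\beta,Z}$ one estimates
\[
\|\cB\mathbf x\|_{1,\beta}=\sum_{n\ge1}\mu_n^\beta\nu_n\|B_{n-1}x_{n-1}\|\le c\sum_{n\ge1}\gamma_{n-1}\mu_{n-1}^\beta\|x_{n-1}\|\le c\Big(\sum_{n=0}^\infty\gamma_n\Big)\|\mathbf x\|_{\infty,\beta},
\]
and the analogous computation, using additionally $\mu_{n-1}<\mu_n$, gives $\|\cB\mathbf x\|_{1,-\beta}\le c\big(\sum_{n=0}^\infty\gamma_n\big)\|\mathbf x\|_{\infty,-\beta}$ for $\mathbf x\in\ell^\infty_{-\beta,Z}$. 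Thus, with $\Gamma:=\sum_{n=0}^\infty\gamma_n<+\infty$ finite by~\eqref{eq: gamman is summable}, $\cB$ is a bounded linear operator both from $\ell^\infty_{\beta,Z}$ into $\ell^1_{\beta,0}$ and from $\ell^\infty_{-\beta,Z}$ into $\ell^1_{-\beta,0}$, of norm at most $c\Gamma$ in each case.

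Now I would observe that, for $\mathbf y\in\ell^1_{\beta,0}$, a sequence $\mathbf x\in\ell^\infty_{\beta,Z}$ solves $x_{n+1}-(A_n+B_n)x_n=y_{n+1}$ for all $n\in\bbN$ if and only if $x_{n+1}-A_nx_n=(\mathbf y+\cB\mathbf x)_{n+1}$ for all $n$, which — since $\mathbf y+\cB\mathbf x\in\ell^1_{\beta,0}$ and $x_0\in Z$, and by the uniqueness in Theorem~\ref{theo: from dich to adm} — is equivalent to $(\Id-T_\beta\cB)\mathbf x=T_\beta\mathbf y$. Choosing $c$ so small that $c\Gamma\max\{\|T_\beta\|,\|T_{-\beta}\|\}<1$, the operator $\Id-T_\beta\cB$ is invertible on $\ell^\infty_{\beta,Z}$ via the Neumann series, so this equation has the unique solution $\mathbf x=(\Id-T_\beta\cB)^{-1}T_\beta\mathbf y\in\ell^\infty_{\beta,Z}$; this is precisely assumption~(1) of Theorem~\ref{theo: adm to dich} for~\eqref{eq: perturbed eq}. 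Running the identical argument with $-\beta$ in place of $\beta$ yields assumption~(2), and Theorem~\ref{theo: adm to dich} then gives that~\eqref{eq: perturbed eq} admits a $(\mu,\nu)$-dichotomy.

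The only point requiring genuine care is the weighted bound on $\cB$: the weight in~\eqref{eq: unif condition on Bn} is tailored so that $\cB$ sends an $\ell^\infty_{\pm\beta}$-bounded sequence to an $\ell^1_{\pm\beta}$-summable one — it absorbs the factor $\nu_{n+1}$ and the ratio $\mu_n^\beta/\mu_{n+1}^\beta$, and the resulting summability is supplied by~\eqref{eq: gamman is summable}. Everything else (boundedness of $T_{\pm\beta}$, the Neumann-series inversion, and invoking Theorem~\ref{theo: adm to dich}) is routine once the characterization of Section~\ref{sec: charct one sided} is in hand.
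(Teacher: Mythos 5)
Your proof is correct, and it follows the same overall strategy as the paper -- reduce persistence to the admissibility characterization of Section~\ref{sec: charct one sided} and treat the perturbation as a small linear operator -- but the technical packaging is genuinely different. The paper first reformulates Theorems~\ref{theo: from dich to adm} and~\ref{theo: adm to dich} as Theorem~\ref{theo: reformulation charct dich}: Eq.~\eqref{LE} has a $(\mu,\nu)$-dichotomy iff the closed operators $\mathbf A_{\pm\beta}$ are invertible; it then endows $\mathcal D(\mathbf A_{\pm\beta})$ with graph norms, checks that $\mathbf A_{\pm\beta}-\mathbf B_{\pm\beta}$ is bounded on these graph-norm domains and differs from the invertible $\mathbf A_{\pm\beta}$ by an operator of norm at most $c\sum_{n}\gamma_n$ (estimates \eqref{eq: est0 pers}--\eqref{eq: est1 pers}), and concludes by stability of invertibility under small bounded perturbations. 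You instead work directly with the bounded solution operators $T_{\pm\beta}$ (whose boundedness you justify exactly as the paper does, via the Closed Graph Theorem argument of Lemma~\ref{lem: projections are bounded}), recast the perturbed admissibility problem as the fixed-point equation $(\Id-T_{\pm\beta}\cB)\mathbf x=T_{\pm\beta}\mathbf y$, and invert by a Neumann series once $c\bigl(\sum_n\gamma_n\bigr)\max\{\|T_\beta\|,\|T_{-\beta}\|\}<1$. The two routes are equivalent in substance (invertibility of $\mathbf A_{\pm\beta}-\mathbf B_{\pm\beta}$ amounts to invertibility of $\Id-T_{\pm\beta}\mathbf B_{\pm\beta}$), but yours buys a more elementary presentation -- no graph norms or closed-operator bookkeeping -- and makes the transfer of existence \emph{and} uniqueness to the perturbed equation explicit through the stated equivalence, which relies correctly on the uniqueness clause of Theorem~\ref{theo: from dich to adm}; the weighted estimates on $\cB$, including the use of the monotonicity of $\mu$ in the $-\beta$ case, coincide with the paper's.
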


\begin{remark}\label{remark: cond on Bn}
Observe that in the case when $\mu_{n+1}\leq K\mu_{n}$ for every $n\in \bbN$ and some $K>0$, that is, the sequence $(\mu_n)_{n\in \bbN}$ does not grow faster than exponential, condition \eqref{eq: unif condition on Bn} can be reduced to 
\begin{equation}\label{eq: unif cond Bn remark}
\|B_n\|\le \frac{c\gamma_n}{\nu_{n+1}}, 
\end{equation}
for $n\in \bbN$ and $c>0$ small enough. In particular, this comment applies to the classical settings of exponential, polynomial and logarithmic dichotomy. Observe moreover that whenever we are in the case of an \emph{uniform} dichotomy, meaning that the sequence $(\nu_n)_{n\in \bbN}$ is constant, condition \eqref{eq: unif cond Bn remark} is basically saying that the classical $\ell^1$-norm of $(\|B_n\|)_{n\in \bbN}$ is small. This indicates that our robustness is, in general, not optimal  as, it is  for example well-known,  that the notion of uniform exponentially dichotomy persists under the requirement that $\sup_n \|B_n\|$ is small. On the other hand, in the case of polynomial and logarithmic dichotomy it does generalize existing results \cite{D1, DSS, DSS2, Silva} since, for instance, we do not impose any bounded growth condition in \eqref{LE} contrary to what is done in those works. 

We note that the robustness of $(\mu, \nu)$-dichotomies was discussed in~\cite[Theorem 3.9]{Chu}. There are important differences between this result and our Theorem~\ref{theo: persistance unif bound}. Firstly, in~\cite{Chu} only the case of invertible dynamics is discussed. Moreover, \cite[Theorem 3.9]{Chu} is not applicable in the case where $\nu_n=1$ for each $n$, since in this setting~\cite[(2.5)]{Chu} is not fulfilled. In particular, it is not applicable to the classical settings of uniform exponential, polynomial and logarithmic dichotomy.

Finally, we would like to compare Theorem~\ref{theo: persistance unif bound} with an unpublished work~\cite{BS3}. Letting $a_n=\mu_n^\lambda$ and $b_n=\mu_n^{-\lambda}$, we observe that~\cite[Corollary 6]{BS3} gives the same conclusion as Theorem~\ref{theo: persistance unif bound} under a slightly stronger condition:
\begin{equation}\label{robalt}
 \|B_n\|\le \frac{c\gamma_n\mu_{n}^\lambda}{\nu_{n+1}\mu_{n+1}^\lambda}, \quad \text{for $n\in \mathbb N$,}
\end{equation}
where $c>0$ is sufficiently small and $(\gamma_n)$ satisfying~\eqref{eq: gamman is summable}. Indeed, observe that since $\beta<\lambda$ and $\mu_n/\mu_{n+1}<1$ one has 
\[
\left (\frac{\mu_n}{\mu_{n+1}}\right )^\lambda \le \left (\frac{\mu_n}{\mu_{n+1}}\right )^\beta.
\]
Consequently, \eqref{robalt}  implies~\eqref{eq: unif condition on Bn}, while the converse is not true (if $(\mu_n)_n$ is not slowly varying).
Therefore, our result is slightly stronger than~\cite[Corollary 6]{BS3}. We stress that the arguments in~\cite{BS3} and the present paper are completely different. In particular, the results in~\cite{BS3} do not rely on admissibility.

\end{remark}

In order to prove Theorem \ref{theo: persistance unif bound} let us introduce some terminology. Given a closed subspace $Z\subset X$ and  $\beta>0$, let us consider consider the linear operators $\mathbf A_{\beta}\colon \mathcal{D}(\mathbf A_{\beta})\subset \ell^\infty_{\beta, Z} \to \ell^1_{\beta ,0}$ and $\mathbf A_{-\beta}\colon \mathcal{D}(\mathbf A_{-\beta})\subset \ell^\infty_{-\beta, Z} \to \ell^1_{-\beta ,0}$ given by
\begin{equation*}
    (\mathbf A_j \mathbf x)_n=x_n-A_{n-1}x_{n-1} \text{ for } n\geq 1
\end{equation*}
and $(\mathbf A_j \mathbf x)_0=0$ for $j=\beta, -\beta$ where $\mathcal{D}(\mathbf A_{\beta})=\{\mathbf x \in \ell^\infty_{\beta ,Z}: \mathbf A_{\beta} \mathbf x\in \ell^1_{\beta ,0}\}$ and $\mathcal{D}(\mathbf A_{-\beta})=\{\mathbf x \in \ell^\infty_{-\beta ,Z}: \mathbf A_{-\beta} \mathbf x\in \ell^1_{-\beta ,0}\}$. Then, using this terminology we can reformulate Theorems \ref{theo: from dich to adm} and \ref{theo: adm to dich} as follows.

\begin{theorem}\label{theo: reformulation charct dich} 
Let us consider the following conditions: 
\begin{itemize}
    \item[i)] Eq.~\eqref{LE} admits a $(\mu,\nu)$-dichotomy.
    \item[ii)] There exist a closed subspace $Z\subset X$ and  $\beta>0$ such that for every $\mathbf y^1\in \ell^1_{\beta ,0}$ there exists a unique $\mathbf x^1\in \ell^\infty_{\beta ,Z}$ satisfying $\mathbf A_{\beta} \mathbf x^1=\mathbf y^1$ and for every $\mathbf y^2\in \ell^1_{-\beta ,0}$ there exists a unique $\mathbf x^2\in \ell^\infty_{-\beta ,Z}$ satisfying $\mathbf A_{-\beta} \mathbf x^2=\mathbf y^2$. In other words, operators $\mathbf A_{\beta}$ and $\mathbf A_{-\beta}$ are invertible.
\end{itemize}
Thus, if $i)$ holds and \eqref{munu} is satisfied then $ii)$ also holds. Reciprocally, if $ii)$ holds then $i)$ also holds.
\end{theorem}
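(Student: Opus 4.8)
The plan is to deduce this statement directly from Theorems~\ref{theo: from dich to adm} and~\ref{theo: adm to dich}, the only work being a translation of the admissibility statements into the language of the operators $\mathbf A_{\pm\beta}$. The key bookkeeping observation is the following: for $j\in\{\beta,-\beta\}$, $\mathbf x\in\ell^\infty_{j,Z}$ and $\mathbf y\in\ell^1_{j,0}$, the identity $\mathbf A_j\mathbf x=\mathbf y$ is, after the index shift $n\mapsto n+1$, precisely equation~\eqref{adm}. Indeed, the constraint $(\mathbf A_j\mathbf x)_0=0$ matches the requirement $y_0=0$ built into $\ell^1_{j,0}$, and whenever~\eqref{adm} holds with $\mathbf y\in\ell^1_{j,0}$ one automatically has $\mathbf A_j\mathbf x\in\ell^1_{j,0}$, hence $\mathbf x\in\mathcal D(\mathbf A_j)$. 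Consequently, the assertion ``for every $\mathbf y\in\ell^1_{j,0}$ there is a unique $\mathbf x\in\ell^\infty_{j,Z}$ satisfying~\eqref{adm}'' is equivalent to the assertion that $\mathbf A_j\colon\mathcal D(\mathbf A_j)\to\ell^1_{j,0}$ is a bijection, i.e.\ invertible (uniqueness among all of $\ell^\infty_{j,Z}$ corresponds to injectivity of $\mathbf A_j$, since any such $\mathbf x$ automatically lies in $\mathcal D(\mathbf A_j)$).

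For the implication $i)\Rightarrow ii)$, I would assume that~\eqref{LE} admits a $(\mu,\nu)$-dichotomy with constants $D,\lambda$ and that~\eqref{munu} holds with some $\varepsilon\in[0,\lambda)$, and then pick any $\beta\in(0,\lambda-\varepsilon)$. For this choice both $\beta$ and $-\beta$ lie in the interval $(-(\lambda-\varepsilon),\lambda)$, so Theorem~\ref{theo: from dich to adm} applies with growth parameter $\beta$ and, separately, with $-\beta$; taking $Z:=\Ker P_0$ as in that statement yields the unique solvability of~\eqref{adm} in $\ell^\infty_{\beta,Z}$ for data in $\ell^1_{\beta,0}$ and in $\ell^\infty_{-\beta,Z}$ for data in $\ell^1_{-\beta,0}$. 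By the translation above this is exactly the invertibility of $\mathbf A_\beta$ and $\mathbf A_{-\beta}$, so $ii)$ holds.

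For the converse $ii)\Rightarrow i)$, I would start from a closed subspace $Z\subset X$ and $\beta>0$ for which $\mathbf A_\beta$ and $\mathbf A_{-\beta}$ are invertible. Applying the translation in the reverse direction, invertibility of $\mathbf A_\beta$ gives that for each $\mathbf y\in\ell^1_{\beta,0}$ there is a unique $\mathbf x\in\ell^\infty_{\beta,Z}$ satisfying~\eqref{adm}, and likewise invertibility of $\mathbf A_{-\beta}$ gives the analogous statement with $-\beta$. These are precisely hypotheses~(1) and~(2) of Theorem~\ref{theo: adm to dich}, whose conclusion is that $(A_n)_{n\in\mathbb N}$ admits a $(\mu,\nu)$-dichotomy, i.e.\ $i)$ holds. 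I do not expect any genuine obstacle: the mathematical content is contained in Theorems~\ref{theo: from dich to adm} and~\ref{theo: adm to dich}, and the only points requiring a little care are checking that the single choice $\beta\in(0,\lambda-\varepsilon)$ meets the hypotheses of Theorem~\ref{theo: from dich to adm} for both signs, and verifying that the domain condition defining $\mathcal D(\mathbf A_{\pm\beta})$ is automatically satisfied by any solution of~\eqref{adm} with admissible data.
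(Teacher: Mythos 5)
Your proposal is correct and coincides with the paper's treatment: the paper states this theorem as a direct reformulation of Theorems~\ref{theo: from dich to adm} and~\ref{theo: adm to dich} (offering no separate proof), and your argument supplies exactly the intended translation — choosing $\beta\in(0,\lambda-\varepsilon)$ so that both signs fall in the admissible range, taking $Z=\Ker P_0$, and observing that any solution of~\eqref{adm} with data in $\ell^1_{\pm\beta,0}$ automatically lies in $\mathcal D(\mathbf A_{\pm\beta})$, so unique solvability is the same as bijectivity of $\mathbf A_{\pm\beta}$.
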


\begin{proof}[Proof of Theorem \ref{theo: persistance unif bound}] 
Let $Z\subset X$ and $\beta>0$ be given by Theorem \ref{theo: reformulation charct dich}. From Theorem \ref{theo: from dich to adm} it follows that $\beta$ may be any value in $(0,\lambda -\varepsilon)$. In particular, we may assume without loss of generality that $\beta$ is such that \eqref{eq: unif condition on Bn} is satisfied. Let us also consider the operators $\mathbf A_{\beta}$ and $\mathbf A_{-\beta}$ given above. We endow $\mathcal{D}(\mathbf A_{\beta})$ and $\mathcal{D}(\mathbf A_{-\beta})$ with the graph norms 
\[\|\mathbf x\|_{\mathbf A_{\beta}}= \|\mathbf x\|_{\infty,\beta}+\|\mathbf A_{\beta}(\mathbf x)\|_{1,\beta} \]
and 
\[\|\mathbf x\|_{\mathbf A_{-\beta}}= \|\mathbf x\|_{\infty,-\beta}+\|\mathbf A_{-\beta}(\mathbf x)\|_{1,-\beta}, \]
respectively. By proceeding as in the proof of Lemma \ref{lem: projections are bounded} we can conclude that $\mathbf A_{\beta}$ and $\mathbf A_{-\beta}$ are closed operators. In particular, $(\mathcal{D}(\mathbf A_j),\|\cdot\|_{\mathbf A_j})$, $j=\beta,-\beta$, are Banach spaces. Consider also the operators $\mathbf B_{\beta}\colon  \ell^\infty_{\beta,Z} \to \ell^\infty_{\beta,0}$ and $\mathbf B_{-\beta}\colon  \ell^\infty_{-\beta,Z} \to \ell^\infty_{-\beta,0}$ given by 
\[(\mathbf B_j\mathbf x)_n=B_{n-1}x_{n-1} \text{ for }n\geq 1\]
and $(\mathbf B_j\mathbf x)_0=0$ for $j=\beta, -\beta$. Observe that condition \eqref{eq: unif condition on Bn} guarantee that both $\mathbf B_{\beta}$ and $\mathbf B_{-\beta}$ are well-defined. 

Now, using \eqref{eq: unif condition on Bn} we get that for every $\mathbf x\in \mathcal{D}(\mathbf A_{\beta})$, 
\begin{equation}\label{eq: est0 pers}
\begin{split}
\|((\mathbf A_{\beta}-\mathbf B_{\beta})\mathbf x)_n\|&\leq \|(\mathbf B_{\beta}\mathbf x)_n\|+\|(\mathbf A_{\beta}\mathbf x)_n\|\\
&\leq \frac{c\gamma_{n-1}\mu_{n-1}^\beta}{\nu_{n}\mu_{n}^\beta} \|x_{n-1}\|+\|(\mathbf A_{\beta}\mathbf x)_n\|
\end{split}
\end{equation}
for every $n\geq 1$. Consequently, using \eqref{eq: gamman is summable} and that $\mathbf x \in  \ell^\infty_{\beta}$ and $\mathbf A_{\beta}\mathbf x \in  \ell^1_{\beta,0}$ we get that 
\begin{equation}\label{eq: est1 pers}
\begin{split}
\|(\mathbf A_{\beta}-\mathbf B_{\beta})\mathbf x\|_{1,\beta}&\leq \sum_{n=0}^{\infty} c\gamma_{n} \mu_n^{\beta} \|x_{n}\|+\|\mathbf A_{\beta}\mathbf x\|_{1,\beta}\\
&\leq c\|\mathbf x\|_{\infty, \beta} \sum_{n=0}^{\infty} \gamma_{n}  +\|\mathbf A_{\beta}\mathbf x\|_{1,\beta} <+\infty
\end{split}
\end{equation}
and $(\mathbf A_{\beta}-\mathbf B_{\beta})\mathbf x\in \ell^1_{\beta,0}$. In particular, the operator $\mathbf A_{\beta}-\mathbf B_{\beta}\colon (\mathcal{D}(\mathbf A_{\beta}),\|\cdot\|_{\mathbf A_{\beta}}) \to \ell^1_{\beta,0}$ given by $(\mathbf A_{\beta}-\mathbf B_{\beta})\mathbf x$ is well-defined. Moreover, by \eqref{eq: est1 pers} we have that
\[\|(\mathbf A_{\beta}-\mathbf B_{\beta})\mathbf x\|_{1,\beta}\leq K\|\mathbf x\|_{\mathbf A_{\beta}}\]
for some $K>0$ and, consequently, $\mathbf A_{\beta}-\mathbf B_{\beta}$ is bounded. Furthermore, using part of the estimate obtained in \eqref{eq: est0 pers} we get that
\[
\begin{split}
\|\mathbf A_{\beta}\mathbf x -(\mathbf A_{\beta}-\mathbf B_{\beta})\mathbf x\|_{1,\beta}\leq c\sum_{n=0}^{\infty} \gamma_{n}  \|\mathbf x\|_{\infty, \beta} \leq c\sum_{n=0}^{\infty} \gamma_{n}  \|\mathbf x\|_{\mathbf A_{\beta}} .
\end{split}
\]
Therefore, since $\mathbf A_{\beta}$ is invertible (recall Theorem \ref{theo: reformulation charct dich}), we conclude that for $c>0$ small enough $\mathbf A_{\beta}-\mathbf B_{\beta}$ is also invertible. Proceeding in a similar manner we conclude that $\mathbf A_{-\beta}-\mathbf B_{-\beta}\colon (\mathcal{D}(\mathbf A_{-\beta}),\|\cdot\|_{\mathbf A_{-\beta}}) \to \ell^1_{-\beta,0}$ is also a well-defined bounded and invertible linear operator. Then, combining these two observations with Theorem \ref{theo: reformulation charct dich} we get that \eqref{eq: perturbed eq} admits a $(\mu, \nu)$-dichotomy.
\end{proof}

We now present an example to which our results are applicable to but those of \cite{Silva} are not. Observe that in this example condition \cite[(10)]{Silva} is not satisfied.

\begin{example}
Take $X=\mathbb R$, and consider  any growth rate $\mu=(\mu_n)_{n\in \mathbb N}$  such that $\lim_{n\to \infty}\frac{\mu_{n+1}}{\mu_n}=\infty$. For  example, we can take 
 $\mu_n=e^{e^n}$, $n\in \mathbb N$. Moreover, let 
\[
A_n=\left (\frac{\mu_{n+1}}{\mu_n}\right )^{-\frac 1 2 }, \quad n\in \mathbb N.
\]
Observe that 
\[
\mathcal A(m, n)=\left (\frac{\mu_m}{\mu_n}\right )^{-\frac 1 2} \quad \text{for $m\ge n$.}
\]
Consequently, the sequence $(A_n)_{n\in \mathbb N}$ admits a $(\mu, \nu)$-dichotomy where $\nu_n=1$ for $n\in \mathbb N$ with projections  $P_n=\Id$ for $n\in \mathbb N$.  Consider the sequence $\mathbf y=(y_n)_{n\in \mathbb N}\subset \mathbb R$ given by $y_0=0$ and $y_n=1$ for $n\ge 1$. Clearly, $\mathbf y$ is bounded. We claim that there does not exist a bounded sequence $\mathbf x=(x_n)_{n\in \mathbb N}\subset \mathbb R$  with $x_0=0$ such that 
\begin{equation*}
\phi_{n+1}^\mu(x_{n+1}-A_n x_n)=y_{n+1}, \quad n\in \mathbb N,
\end{equation*}
where $\phi_n^\mu:=\frac{\mu_n}{\mu_{n+1}-\mu_n}$.
Indeed, assuming such a sequence does exist, one can easily verify that 
\[
x_n=\sum_{k=1}^n \frac{1}{\phi_k^\mu} \mathcal A(n, k)y_k=\sum_{k=1}^n \frac{1}{\phi_k^\mu} \left (\frac{\mu_n}{\mu_k}\right )^{-\frac  12 }=\mu_n^{-\frac 1 2}\sum_{k=1}^n \frac{1}{\phi_k^\mu} \mu_k^{\frac 1 2}, \quad n\ge 1.
\]
On the other hand, by using the first inequality in~\cite[(7)]{DSV} (whose proof does not rely on the slow-growth property~\cite[p.5 (iii)]{DSV}), we have that 
\[
\sum_{k=1}^n \frac{1}{\phi_k^\mu} \mu_k^{\frac 1 2}=\sum_{k=1}^n \mu_k^{-\frac 1 2}(\mu_{k+1}-\mu_k)\ge 2(\mu_{n+1}^{\frac 1 2}-\mu_1^{\frac 1 2}),
\]
and consequently
\[
x_n\ge 2 \left ( (\mu_{n+1}/\mu_n)^{\frac 12 }-(\mu_1/\mu_n)^{\frac 12 }\right ), \quad n\ge 1.
\]
Since $\lim_{n\to \infty}\frac{\mu_{n+1}}{\mu_n}=\infty$, we obtain a contradiction. This shows that~\cite[Theorem 3.3]{Silva} does not hold in this case (see \cite[Remark 2]{Silva}).
\end{example}

\begin{remark}
We note that the example given in~\cite[Example 1]{D1} illustrates that the bounded growth condition~\cite[(14)]{Silva} cannot be omitted as an assumption in~\cite[Theorem 3.4]{Silva}. However, our results do not require such an assumption.
\end{remark}

\section{The case of two-sided dynamics}\label{sec: two sided}

In this section we are going to consider the case of two-sided dynamics, that is, the case when $I=\bbZ$. 

\subsection{Characterization of $(\mu,\nu)$-dichotomy} \label{sec: charct two sided}
Following the ideas of Section \ref{sec: charct one sided}, we will now present a characterization of $(\mu ,\nu)$-dichotomy in terms of the admissibility of certain weighted spaces. In the present context, in addition to the admissibility of spaces of the form $\ell^1_{\beta}$ and $\ell^\infty_{\beta}$ for some appropriate values of $\beta\in \bbR$, we will also have to consider the admissibility of spaces $\ell^1_{\beta,|\cdot|}$ and $\ell^\infty_{\beta,|\cdot|}$.

\begin{theorem}\label{theo: from dich to adm - two sided}
Suppose that $(A_n)_{n\in \mathbb Z}$ admits a $(\mu,\nu)$-dichotomy with respect to projections $P_n$ and let $\lambda>0$ be such that~\eqref{eq: Es discrete} and~\eqref{eq: Eu discrete} hold. Moreover, suppose that there exists $\varepsilon \in [0, \lambda)$ such that 
\begin{equation}\label{munu two sided}
\sup_{n\in \mathbb N}(\mu_{-n}^{\varepsilon}\nu_{-n})<+\infty \; \text{ and } \;\sup_{n\in \mathbb N}(\mu_n^{-\varepsilon}\nu_n)<+\infty.
\end{equation}
Then,  for each $\beta \in (-(\lambda-\varepsilon), \lambda-\varepsilon)$ the following holds:
\begin{enumerate}
    \item for every $\mathbf y=(y_n)_{n\in \mathbb Z}\in \ell^1_{\beta}$, there exists a unique $\mathbf x=(x_n)_{n\in \mathbb N}\in \ell_{\beta}^\infty$ such that
\begin{equation}\label{adm two sided}
x_{n+1}-A_n x_n=y_{n+1} \quad n\in \mathbb Z.
\end{equation}
    \item for every $\mathbf y=(y_n)_{n\in \mathbb Z}\in \ell^1_{\beta,|\cdot |}$, there exists a unique $\mathbf x=(x_n)_{n\in \mathbb N}\in \ell_{\beta,|\cdot |}^\infty$ such that \eqref{adm two sided} holds.
\end{enumerate}
\end{theorem}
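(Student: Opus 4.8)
The plan is to produce the solution of \eqref{adm two sided} explicitly, by the bi-infinite ``Green's function'' formula
\[
x_n := \sum_{k=-\infty}^{n}\cA(n,k)P_k y_k - \sum_{k=n+1}^{\infty}\cA(n,k)(\Id-P_k)y_k, \qquad n\in\bbZ,
\]
which is the two-sided counterpart of \eqref{xn}; the same formula serves both parts of the theorem. First I would record that, whenever the two series converge absolutely, the very computation performed just after \eqref{xn} in the proof of Theorem~\ref{theo: from dich to adm} — shifting the index of summation and using that $\cA(\cdot,\cdot)$ is an evolution family which, by the definition of a $(\mu,\nu)$-dichotomy, is invertible along $\Ker P_k$ — shows that $\mathbf x$ satisfies \eqref{adm two sided}. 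Thus everything reduces to (i) an estimate in the relevant weighted norm and (ii) uniqueness.

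For part (1) I would repeat, up to replacing $\sum_{k=0}^n$ by $\sum_{k=-\infty}^n$, the estimate following \eqref{xn}: by \eqref{eq: Es discrete} and \eqref{eq: Eu discrete} together with the identities $\mu_n^{\beta}(\mu_n/\mu_k)^{-\lambda}=(\mu_n/\mu_k)^{-(\lambda-\beta)}\mu_k^{\beta}$ and $\mu_n^{\beta}(\mu_k/\mu_n)^{-\lambda}=(\mu_k/\mu_n)^{-(\lambda+\beta)}\mu_k^{\beta}$, and since $\lambda-\beta>0$ and $\lambda+\beta>0$ (which hold because $\beta\in(-(\lambda-\varepsilon),\lambda-\varepsilon)\subset(-\lambda,\lambda)$), both series converge absolutely and $\mu_n^{\beta}\|x_n\|\le D\sum_{k\in\bbZ}\mu_k^{\beta}\nu_k\|y_k\|=D\|\mathbf y\|_{1,\beta}$, so $\mathbf x\in\ell^\infty_{\beta}$. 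For part (2) I would feed the same formula into $\|\cdot\|_{1,\beta,|\cdot|}$ and estimate $\mu_n^{-|\beta|}\|x_n\|$ when $n\ge n_0$ and $\mu_n^{|\beta|}\|x_n\|$ when $n<n_0$; in each case one splits the two sums according to whether the index is $<n_0$ or $\ge n_0$ and uses the monotonicity inequalities $\mu_k^{\lambda-|\beta|}\le 1$ and $\mu_k^{-\lambda}=\mu_k^{-|\beta|}\mu_k^{|\beta|-\lambda}$ for indices $<n_0$, and $\mu_k^{\lambda+|\beta|}\le\mu_n^{\lambda+|\beta|}$ (for $k\le n$) and $\mu_k^{-\lambda}\le\mu_n^{|\beta|-\lambda}\mu_k^{-|\beta|}$ (for $k\ge n\ge n_0$) for indices $\ge n_0$. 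In each of the finitely many cases the surviving power of $\mu_n$ is $\le 0$ on the range in question and one lands on $\sum_{k<n_0}\mu_k^{|\beta|}\nu_k\|y_k\|+\sum_{k\ge n_0}\mu_k^{-|\beta|}\nu_k\|y_k\|\le\|\mathbf y\|_{1,\beta,|\cdot|}$, so $\mathbf x\in\ell^\infty_{\beta,|\cdot|}$.

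Uniqueness is where the two-sided setting genuinely differs, since there is no subspace $Z$ available to force $x_0$ into $\Ker P_0$ as in Theorem~\ref{theo: from dich to adm}. Let $\mathbf z=\mathbf x-\tilde{\mathbf x}$ be the difference of two solutions in $\ell^\infty_\beta$ (resp. $\ell^\infty_{\beta,|\cdot|}$); then $z_{n+1}=A_nz_n$ for all $n\in\bbZ$. Writing $z_0=P_0z_0+(\Id-P_0)z_0$ and using \eqref{eq: Pn An commute}, for $n\ge 0$ one has $P_nz_n=\cA(n,0)P_0z_0$ and $(\Id-P_n)z_n=\cA(n,0)(\Id-P_0)z_0$, hence by invertibility along the kernel $(\Id-P_0)z_0=\cA(0,n)(\Id-P_n)z_n$. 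Estimating this via \eqref{eq: Eu discrete}, bounding $\|(\Id-P_n)z_n\|\le\|z_n\|+\|\cA(n,0)P_0z_0\|$ with \eqref{eq: Es discrete} — crucially \emph{not} going through $\|\Id-P_n\|$, which would cost an extra factor $\nu_n$ — inserting the weighted bound on $\|z_n\|$ and the estimate $\nu_n\le C\mu_n^{\varepsilon}$ furnished by the ``$+\infty$'' half of \eqref{munu two sided}, and letting $n\to+\infty$, we conclude $(\Id-P_0)z_0=0$ (here one uses $\beta>-(\lambda-\varepsilon)$ and $\varepsilon<\lambda$). Thus $z_0\in\Ima P_0$; propagating this through $z_{n+1}=A_nz_n$ together with the injectivity of $A_n|_{\Ker P_n}$ gives $z_n=P_nz_n$ for every $n\in\bbZ$, so $\mathbf z$ is a purely stable solution. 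Finally, for fixed $n$ and $m\le n$ we have $z_n=\cA(n,m)P_mz_m$, and \eqref{eq: Es discrete}, the weighted bound on $\|z_m\|$, the estimate $\nu_m\le C\mu_m^{-\varepsilon}$ from the ``$-\infty$'' half of \eqref{munu two sided}, and $\mu_m\to0$ force $z_n=0$ as $m\to-\infty$ (here one uses $\beta<\lambda-\varepsilon$). For part (2) the same argument applies, now with $\|z_n\|\le\|\mathbf z\|_{\infty,\beta,|\cdot|}\mu_n^{-|\beta|}$ for $n\ge n_0$ and $\le\|\mathbf z\|_{\infty,\beta,|\cdot|}\mu_n^{|\beta|}$ for $n<n_0$, which only makes the two limiting estimates easier. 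This is precisely what dictates the symmetric range $\beta\in(-(\lambda-\varepsilon),\lambda-\varepsilon)$ and the use of \emph{both} conditions in \eqref{munu two sided}, in contrast to the one-sided Theorem~\ref{theo: from dich to adm}.

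The steps I expect to be most delicate are of a bookkeeping rather than conceptual nature: in part (2), organizing the case split at $n_0$ so that the weights $\mu_n^{|\beta|}$ and $\mu_n^{-|\beta|}$ on the two sides are treated in parallel; and, in the uniqueness argument, keeping the estimates clean — in particular recovering $(\Id-P_0)z_0$ by inverting $\cA(n,0)$ along $\Ker P_0$ and controlling $\|(\Id-P_n)z_n\|$ through the already-established decay of $\|\cA(n,0)P_0z_0\|$, since any coarser bound invoking $\|P_n\|\le D\nu_n$ loses a factor $\nu_n$ and shrinks the admissible range of $\beta$.
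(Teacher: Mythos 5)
Your proposal is correct and follows essentially the same route as the paper: the two-sided Green's function formula \eqref{xn two sided} for existence, the split of the sums at $n_0$ for the $\ell^1_{\beta,|\cdot|}\to\ell^\infty_{\beta,|\cdot|}$ estimate, and uniqueness by showing that a homogeneous solution bounded in the weighted norm has vanishing unstable part (letting the base point go to $+\infty$) and vanishing stable part (letting it go to $-\infty$), using both halves of \eqref{munu two sided} and the symmetric range $|\beta|<\lambda-\varepsilon$, exactly as in the paper's decomposition $z_n=z^s_n+z^u_n$. One small correction: for $n\ge n_0$ the norm $\|\cdot\|_{\infty,\beta,|\cdot|}$ yields $\|z_n\|\le \mu_n^{|\beta|}\|\mathbf z\|_{\infty,\beta,|\cdot|}$ (and $\|z_n\|\le \mu_n^{-|\beta|}\|\mathbf z\|_{\infty,\beta,|\cdot|}$ for $n<n_0$), not the reverse as you wrote, so the part-(2) uniqueness limits are in fact slightly worse rather than ``easier''; they still close precisely because $|\beta|<\lambda-\varepsilon$.
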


\begin{proof}
  Take $\beta \in (-(\lambda-\varepsilon), \lambda-\varepsilon)$, $\mathbf y=(y_n)_{n\in \mathbb Z}\in \ell^1_{\beta}$  and set
\begin{equation}\label{xn two sided}
x_n:=\sum_{k=-\infty}^n \mathcal A(n, k)P_k y_k-\sum_{k=n+1}^\infty \mathcal A(n, k)(\Id-P_k)y_k, \quad n\in \mathbb Z.
\end{equation}
Then, proceeding as in the proof of Theorem \ref{theo: from dich to adm} we obtain that $\mathbf x=(x_n)_{n\in \mathbb Z}\in \ell^\infty_{\beta}$ and that \eqref{adm two sided} is satisfied.
We now establish the uniqueness of such $\mathbf x$. For this purpose, by linearity, all we have to do is to prove that the unique sequence $\mathbf z=(z_n)_{n\in \mathbb Z}$ in $\ell^\infty_{\beta}$ that satisfies
\[
z_{n+1}=A_nz_n, \quad n\in \mathbb Z
\]
is the null sequence. That is, $z_n=0$ for all $n\in \bbZ$. Let us consider
\[z^s_n=P_nz_n \; \text{ and } \; z^u_n=(\Id -P_n)z_n \; \text{ for } n\in \bbZ.\]
Then, $z_n=z^s_n+z^u_n$ and, by \eqref{eq: Pn An commute},
\[z^s_{n+1}=A_nz^s_n \; \text{ and } \; z^u_{n+1}=A_nz^u_n \; \text{ for all } n\in \bbZ.\]
Thus, using \eqref{eq: Es discrete}, we get that for every $m\geq n$,
\[
\begin{split}
\mu_m^\lambda\|z^s_m\|&=\mu_m^\lambda\|\mathcal A(m,n)z^s_n\|\\
&=\mu_m^\lambda\|\mathcal A(m,n)P_nz_n\|\\
&\leq D\nu_n \mu_n^{\lambda}\|z_n\|\\
&\leq D\nu_n \mu_n^{\lambda-\beta} \|\mathbf z\|_{\infty, \beta} \\
&\le D\mu_n^{\lambda-\beta-\varepsilon}\|\mathbf z\|_{\infty, \beta} \cdot \sup_{k\leq m}(\mu_k^{\varepsilon}\nu_k).
\end{split}
\]
Consequently, since $\lim_{n\to -\infty}\mu_n=0$, letting $n\to -\infty$ and using~\eqref{munu two sided} and $\beta<\lambda-\varepsilon$ it follows that $z^s_m=0$ for every $m\in \bbZ$. Similarly, using \eqref{eq: Eu discrete}, we get that for every $m< n$,
\[
\begin{split}
\mu_m^{-\lambda}\|z^u_m\|&=\mu_m^{-\lambda} \|\mathcal A(m,n)z^u_n\|\\
&=\mu_m^{-\lambda}\|\mathcal A(m,n)(\Id-P_n)z_n\|\\
&\leq D\nu_n \mu_n^{-\lambda}\|z_n\|\\
&\leq D\nu_n \mu_n^{-(\lambda+\beta)} \|\mathbf z\|_{\infty, \beta} \\
&\le D\mu_n^{-(\lambda+\beta-\varepsilon)}\|\mathbf z\|_{\infty, \beta} \cdot \sup_{k\geq  m}(\mu_k^{-\varepsilon}\nu_k).
\end{split}
\]
Therefore, since $\lim_{n\to +\infty}\mu_n=+\infty$, letting $n\to +\infty$ and using~\eqref{munu two sided} and $\beta>-(\lambda-\varepsilon)$ it follows that $z^u_m=0$ for every $m\in \bbZ$. Combining these observations we conclude that $z_m=z^s_m+z^u_m=0$ for every $m\in \bbZ$ completing the proof of the first claim in the theorem. Let us now prove the second one.

Given  $\beta \in (-(\lambda-\varepsilon), \lambda-\varepsilon)$ and  $\mathbf y=(y_n)_{n\in \mathbb Z}\in \ell^1_{\beta, |\cdot|}$, let us consider $x_n$, $n\in \bbZ$, defined by~\eqref{xn two sided}. Then, using \eqref{eq: Es discrete} and \eqref{eq: Eu discrete}, if $n\geq n_0$ we get that 
\[
\begin{split}
\|x_n\| &\le D \sum_{k=-\infty}^n \left (\frac{\mu_n}{\mu_k}\right )^{-\lambda}\nu_k \|y_k\|+D \sum_{k=n+1}^\infty \left (\frac{\mu_k}{\mu_n}\right )^{-\lambda}\nu_k \|y_k\| \\
&= D \sum_{k=-\infty}^{n_0-1} \left (\frac{\mu_n}{\mu_k}\right )^{-\lambda}\nu_k \|y_k\|+D \sum_{k=n_0}^n \left (\frac{\mu_n}{\mu_k}\right )^{-\lambda}\nu_k \|y_k\|\\
&\phantom{=}+D \sum_{k=n+1}^\infty \left (\frac{\mu_k}{\mu_n}\right )^{-\lambda}\nu_k \|y_k\| \\
&= D\sum_{k=-\infty}^{n_0-1} \left (\frac{\mu_n}{\mu_k}\right )^{-(\lambda-|\beta|)} 
\mu_n^{-|\beta|} \mu_k^{|\beta|} \nu_k \|y_k\|\\
&\phantom{=}+D \sum_{k=n_0}^n \left (\frac{\mu_n}{\mu_k}\right )^{-(\lambda+|\beta|)}  \mu_n^{|\beta|}\mu_k^{-|\beta|} \nu_k \|y_k\|\\
&\phantom{=} +D\sum_{k=n+1}^\infty \left (\frac{\mu_k}{\mu_n}\right )^{-(\lambda-|\beta|)} \mu_n^{|\beta|}\mu_k^{-|\beta|} \nu_k \|y_k \|. \\
\end{split}
\]
Thus, since $(\mu_k)_{k\in \bbZ}$ is strictly increasing, $|\beta|<\lambda$ and $\mu_n\geq 1$ for every $n\geq n_0$, it follows that
\[
\begin{split}
\|x_n\| & \leq D\mu_n^{|\beta|} \left( \sum_{k=-\infty}^{n_0-1} \mu_k^{|\beta|} \nu_k \|y_k\|+\sum_{k=n_0}^\infty \mu_k^{-|\beta|} \nu_k \|y_k \|\right)\\
\end{split}
\]
which implies that
\[
\mu_n^{-|\beta|}\|x_n\| \leq  D\|\textbf{y}\|_{1,\beta,|\cdot |}.
\]
Similarly, in the case when $n< n_0$ we can prove that
\[\mu_n^{|\beta|} \|x_n\| \leq D\|\textbf{y}\|_{1,\beta,|\cdot |}.\]
Consequently, $\textbf{x}=(x_n)_{n\in \bbZ}\in \ell^\infty_{\beta,|\cdot|}$. Moreover, proceeding again as in the proof of Theorem \ref{theo: from dich to adm} we conclude that \eqref{adm two sided} is satisfied. Finally, the uniqueness of this $\mathbf x\in \ell^\infty_{\beta, |\cdot|}$ can be obtained via an argument similar to the one we did in the proof of the first claim of the theorem. The proof of the theorem is completed.
\end{proof}

Our next theorem gives us the converse result of Theorem \ref{theo: from dich to adm - two sided}.

\begin{theorem}\label{theo: adm to dich - two sided}
Suppose that there exists $\beta>0$  such that:
\begin{enumerate}
\item for each $\mathbf y=(y_n)_{n\in \mathbb Z} \in \ell^1_{\beta}$, there exists a unique $\mathbf x=(x_n)_{n\in \mathbb Z}\in \ell_{\beta}^\infty$ such that~\eqref{adm two sided} holds;
\item for each $\mathbf y=(y_n)_{n\in \mathbb Z}\in \ell^1_{-\beta}$, there exists a unique $\mathbf x=(x_n)_{n\in \mathbb Z}\in \ell_{-\beta}^\infty$ such that~\eqref{adm two sided} holds;
\item for each $\mathbf y=(y_n)_{n\in \mathbb Z}\in \ell^1_{\beta,|\cdot|}$, there exists a unique $\mathbf x=(x_n)_{n\in \mathbb Z}\in \ell_{\beta,|\cdot|}^\infty$ such that~\eqref{adm two sided} holds.
\end{enumerate}
Then, $(A_n)_{n\in \mathbb Z}$ admits a $(\mu,\nu)$-dichotomy. 
\end{theorem}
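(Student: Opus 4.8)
The plan is to follow closely the proof of Theorem~\ref{theo: adm to dich}, the novelty being the bookkeeping forced by having \emph{three} admissible pairs rather than two. The pair in assumption~$(3)$, built on $\ell^1_{\beta,|\cdot|}$ and $\ell^\infty_{\beta,|\cdot|}$, will play the role that the inclusion $\ell^\infty_\beta\subset\ell^\infty_{-\beta}$ played in the one-sided case: that inclusion fails for $I=\bbZ$ because $\mu_n\to 0$ as $n\to-\infty$. Throughout one uses the elementary inclusions $\ell^1_{\beta},\,\ell^1_{-\beta}\subset\ell^1_{\beta,|\cdot|}$ and $\ell^\infty_{\beta},\,\ell^\infty_{-\beta}\subset\ell^\infty_{\beta,|\cdot|}$ (valid since $\mu_n<1$ for $n<n_0$ and $\mu_n\ge1$ for $n\ge n_0$), together with the fact that \emph{every} bounded two-sided sequence lies in $\ell^\infty_{\beta,|\cdot|}$. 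From assumption~$(3)$ one records at the outset: (a) the only bounded sequence $(z_n)_{n\in\bbZ}$ with $z_{n+1}=A_nz_n$ for all $n$ is the null sequence; (b) if $\mathbf y\in\ell^1_\beta\cap\ell^1_{-\beta}$ and $\mathbf x^1\in\ell^\infty_\beta$, $\mathbf x^2\in\ell^\infty_{-\beta}$ both solve~\eqref{adm two sided}, then $\mathbf x^1=\mathbf x^2$ (the analogue of Lemma~\ref{aux}).

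First I would introduce
\[
S(n)=\Big\{v\in X:\ \sup_{m\ge n}\|\mathcal A(m,n)v\|<+\infty\Big\}
\]
and let $U(n)$ be the set of $v\in X$ admitting a bounded backward orbit ending at $v$, i.e.\ a bounded sequence $(x_m)_{m\le n}$ with $x_n=v$ and $x_{m+1}=A_mx_m$ for $m<n$; clearly $A_nS(n)\subset S(n+1)$ and $A_nU(n)\subset U(n+1)$. To prove $X=S(n)\oplus U(n)$, given $v\in X$ take $\mathbf y$ with $y_n=v$ and $y_m=0$ otherwise; assumptions $(1)$ and $(2)$ give solutions of~\eqref{adm two sided} in $\ell^\infty_\beta$ and in $\ell^\infty_{-\beta}$, which by (b) coincide in a single sequence $\mathbf x$ that is therefore bounded as $m\to+\infty$ (from $\ell^\infty_\beta$, since $\beta>0$ and $\mu$ is increasing) and as $m\to-\infty$ (from $\ell^\infty_{-\beta}$). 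Evaluating~\eqref{adm two sided} for $m\ge n$ gives $x_m=\mathcal A(m,n)x_n$, so $x_n\in S(n)$; evaluating it for $m\le n-1$ exhibits a bounded backward orbit ending at $A_{n-1}x_{n-1}$, so $A_{n-1}x_{n-1}\in U(n)$; since $x_n-A_{n-1}x_{n-1}=v$ we get $v\in S(n)+U(n)$, and the decomposition shows $P_nv=x_n$ for the projection $P_n$ onto $S(n)$ with kernel $U(n)$. If $v\in S(n)\cap U(n)$, gluing the forward orbit $m\mapsto\mathcal A(m,n)v$ with a bounded backward orbit ending at $v$ yields a bounded two-sided solution of the homogeneous equation, hence $v=0$ by (a). The same gluing argument shows that $A_n|_{U(n)}\colon U(n)\to U(n+1)$ is an isomorphism.

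Next, as in Lemma~\ref{lem: projections are bounded}, the Closed Graph Theorem produces bounded solution operators $T_\beta\colon\ell^1_\beta\to\ell^\infty_\beta$ and $T_{-\beta}\colon\ell^1_{-\beta}\to\ell^\infty_{-\beta}$; testing $T_\beta$ on $\mathbf y$ supported at $n$ and using $P_nv=x_n$ gives $\|P_n\|\le\|T_\beta\|\nu_n$. Arguing exactly as in Lemmas~\ref{lem: bounds stable direction} and~\ref{lem: bounds unstable direction} — feeding the sequences $m\mapsto\mathcal A(m,n)v$ (for $v\in S(n)$, truncated to $m\ge n$) and $m\mapsto\mathcal A(m,n)v$ (for $v\in U(n)$, truncated to $m\le n$) into~\eqref{adm two sided} and invoking (b) to relocate them into $\ell^\infty_\beta$, resp.\ $\ell^\infty_{-\beta}$ — yields
\[
\|\mathcal A(m,n)v\|\le\|T_\beta\|\Big(\frac{\mu_m}{\mu_n}\Big)^{-\beta}\nu_n\|v\|\ \ (m\ge n,\ v\in S(n)),\qquad \|\mathcal A(m,n)v\|\le(\|T_{-\beta}\|+1)\Big(\frac{\mu_n}{\mu_m}\Big)^{-\beta}\nu_n\|v\|\ \ (m\le n,\ v\in U(n)).
\]
Forming $\mathcal G(m,n)$ as before gives a bound with $\nu_n^2$; to replace it by $\nu_n$, one repeats the argument of Lemma~\ref{Green}, now using $T_\beta$ to estimate $\mathcal G(m,n)$ for $m\ge n$ and $T_{-\beta}$ for $m<n$ (note that, in contrast to the one-sided case, the spaces $\ell^1_{\pm\beta}$ impose no constraint at $0$, so no separate treatment of $n=0$ is needed). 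Finally, $A_nS(n)\subset S(n+1)$ and $A_nU(n)\subset U(n+1)$ give $A_nP_n=P_{n+1}A_n$, the isomorphism property of $A_n|_{U(n)}$ was established above, and the two $\mathcal G$-estimates are precisely~\eqref{eq: Es discrete} and~\eqref{eq: Eu discrete} with $\lambda=\beta$ and $D=\max\{\|T_\beta\|,\|T_{-\beta}\|\}+1$; hence $(A_n)_{n\in\bbZ}$ admits a $(\mu,\nu)$-dichotomy.

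The main obstacle is exactly this interplay of the three pairs: no single admissible pair controls both the forward ($m\ge n$) and the backward ($m\le n$) behaviour, so each auxiliary sequence must be routed through $\ell^\infty_\beta$ or through $\ell^\infty_{-\beta}$ depending on which range one needs, with the ``$|\cdot|$''-pair reserved for gluing and for the two-sided uniqueness statements (a) and (b). Checking that each such sequence genuinely lies in the space dictated by the relevant assumption — rather than only in a larger weighted space — is the delicate part of the argument.
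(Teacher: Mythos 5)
Your proposal is correct and follows essentially the same route as the paper's proof: the same subspaces $S(n)$ and $U(n)$, the same uniqueness lemma obtained by pushing everything into the pair $(\ell^1_{\beta,|\cdot|},\ell^\infty_{\beta,|\cdot|})$, the Closed Graph Theorem for $T_{\beta}$ and $T_{-\beta}$, the splitting and projection bounds, and the same two-step Green-function estimate replacing $\nu_n^2$ by $\nu_n$. The only (harmless) deviation is that you derive injectivity of $A_n|_{U(n)}$ by gluing and invoking uniqueness in assumption (3), while the paper uses uniqueness in assumption (1) applied to the backward orbit extended by zeros; both work.
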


\begin{proof}
    We will proceed as in the proof of Theorem \ref{theo: adm to dich} and again, as in the aforementioned proof, in all the auxiliary results we assume that the hypotheses of Theorem \ref{theo: adm to dich - two sided} are satisfied even though we do not write it explicitly each time.
 We start with two auxiliary observations.
    \begin{lemma}\label{aux two sided}
Let $\mathbf y=(y_n)_{n\in \mathbb Z}\in \ell^1_{\beta}\cap \ell^1_{-\beta}$, $\mathbf x^1=(x_n^1)_{n\in \mathbb Z}\in \ell^\infty_{\beta}$ and $\mathbf x^2=(x_n^2)_{n\in \mathbb Z}\in \ell^\infty_{-\beta}$ be such that both pairs $(\mathbf x^i, \mathbf y)$, $i\in \{1, 2\}$ satisfy~\eqref{adm two sided}. Then, $x_n^1=x_n^2$ for every $n\in \mathbb Z$. 
\end{lemma}
\begin{proof}[Proof of Lemma \ref{aux two sided}]
    Since $\ell^1_{\beta}\subset \ell^1_{\beta,|\cdot|}$ and $\ell^1_{-\beta}\subset \ell^1_{\beta,|\cdot|}$, it follows that $\mathbf y\in \ell^1_{\beta,|\cdot|}$. Similarly, since $\ell^\infty_{\beta}\subset \ell^\infty_{\beta,|\cdot|}$ and $\ell^\infty_{-\beta}\subset \ell^\infty_{\beta,|\cdot|}$, it follows that $\mathbf x^1, \mathbf x^2\in \ell^\infty_{\beta,|\cdot|}$. Thus, by the uniqueness given in the third assumption of the statement of Theorem \ref{theo: adm to dich - two sided} it follows that $x_n^1=x_n^2$ for every $n\in \mathbb Z$ as claimed.
\end{proof}

 Let us consider $T_{j}\colon \ell^1_{j}\to \ell^\infty_{j}$ given by $T_j(\mathbf y)=\mathbf x$ where $\mathbf x$ is the unique element in $l^\infty_{j}$ such that~\eqref{adm two sided} holds for $j=\beta, -\beta$. 

\begin{lemma}\label{lem: T_beta is bounded}
Then $T_\beta$ and $T_{-\beta}$ are bounded linear operators.
\end{lemma}
\begin{proof}[Proof of Lemma \ref{lem: T_beta is bounded}]
The proof can be obtained by proceeding as in the proof of Lemma \ref{lem: projections are bounded}.
\end{proof}

    Given $n\in \bbZ$, let us consider
\[S(n):=\left\{v\in X: \; \sup_{m\geq n} \|\cA(m,n)v\|<+\infty  \right\}.\]
Similarly, let $U(n)$ be the space of all $v\in X$ for which there exists a sequence $(z_m)_{m\leq n}$ such that  $z_n=v$, $z_m=A_{m-1}z_{m-1} $ for every  $m\leq n$  and $\sup_{m\leq n}\|z_m\|<+\infty $.

It is easy to see that 
\begin{equation*}
    A_nS(n)\subset S(n+1) \quad \text{ and } \quad A_nU(n)\subset U(n+1)
\end{equation*}
for every $n\in \bbZ$. Moreover, we have the following observations.

\begin{lemma}\label{lem: spliting two sided}
For every $n\in \bbZ$, 
\begin{equation}\label{split two sided}
X=S(n)\oplus U(n).
\end{equation}
\end{lemma}
\begin{proof}[Proof of Lemma \ref{lem: spliting two sided}]
Fix $n\in \bbZ$. Given $v\in X$, we define $\mathbf y=(y_m)_{m\in \mathbb Z}$ by $y_n=v$ and $y_m=0$ for $m\neq n$. Then, $\mathbf y\in \ell^1_{\beta} \cap \ell^1_{-\beta}$. By our assumption and Lemma \ref{aux two sided}, there exists $\mathbf x=(x_m)_{m\in \mathbb Z}\in \ell^\infty_{\beta}\cap \ell^\infty_{-\beta}$ such that 
\[
x_{m+1}-A_m x_m=y_{m+1}, \quad m\in \mathbb Z.
\]
In particular, 
\begin{equation}\label{eq: aux spl 1}
x_n-v=A_{n-1}x_{n-1}
\end{equation}
and
\begin{equation}\label{eq: aux spl 2}
x_{m+1}=A_m x_m \quad \text{ for all } m\neq n-1.
\end{equation}
Thus, \eqref{eq: aux spl 2} combined with the fact that $(x_m)_{m\in \bbZ}\in \ell_\beta^\infty $ implies that $x_n \in S(n)$. Similarly, \eqref{eq: aux spl 1} and \eqref{eq: aux spl 2} combined with the fact that $(x_m)_{m\in \bbZ}\in \ell_{-\beta}^\infty $ implies that $x_n-v\in U(n)$. Therefore,
\[v=x_n+(v-x_n)\in S(n)+U(n)\]
and $X=S(n)+U(n)$.

Suppose now that $v\in S(n)\cap U(n)$.  Then there exists $(z_m)_{m\leq n}$ such that $z_n=v$, $z_m=A_{m-1}z_{m-1}$ for every $m\leq n$ and $\sup_{m\leq n} \|z_m\|<+\infty$. Define
\[x_m=\left\{\begin{array}{ll}
  z_m,
  & \text{ for } m\leq n;
\\
 \cA(m,n)v,
 & \text{ for } m> n.
\end{array}\right.\]
Thus, since $v\in S(n)$ and $\sup_{m\leq n}\|z_m\|<+\infty$, it follows that $\sup_{m\in \bbZ}\|x_m\|<+\infty$. In particular, $(x_m)_{m\in \bbZ}\subset \ell^\infty_{\beta,|\cdot|}$ and 
\[x_{m+1}-A_mx_m=0\; \text{ for every }m\in \bbZ.\]
Therefore, from the uniqueness given in the third  hypothesis of Theorem \ref{theo: adm to dich - two sided} we get that $x_m=0$ for every $m\in \bbZ$ which implies that $v=0$ and $S(n)\cap U(n)=\{0\}$. This concludes the proof.
\end{proof}

\begin{lemma}\label{lem: isom two sided}
     For every $n\in \bbZ$, the operator $A_n|_{U(n)}\colon U(n)\to U(n+1)$ is an isomorphism.
\end{lemma}
\begin{proof}[Proof of Lemma \ref{lem: isom two sided}]
    Fix $n\in \bbZ$. Let us start by observing that $A_n|_{U(n)}$ is injective. Suppose there exists $v\in U(n)$ such that $A_nv=0$. By the definition of $U(n)$ there exists $(z_m)_{m\leq n}$ such that $z_n=v$, $z_m=A_{m-1}z_{m-1}$ for every $m\leq n$ and $\sup_{m\leq n} \|z_m\|<+\infty$. Then the sequence $(x_m)_{m\in \bbZ}$ given by 
    \[x_m=\left\{\begin{array}{ll}
     z_m, & \text{ for } m\leq n;\\
 0, & \text{ for } m> n,
\end{array}\right.\]
is in $l^\infty_{\beta }$ and satisfies \eqref{adm two sided} with $\mathbf y =(y_m)_{m\in \bbZ}$ and $y_m=0$ for every $m\in \bbZ$. Consequently, by the uniqueness in the hypothesis of Theorem \ref{theo: adm to dich - two sided}, it follows that $x_m=0$ for every $m\in \bbZ$. In particular, $v=0$ and $A_n|_{U(n)}$ is injective.

Now, given $v\in U(n+1)$, let us consider a sequence $(z_m)_{m\leq n+1}$ such that $z_{n+1}=v$, $z_m=A_{m-1}z_{m-1}$ for every $m\leq n+1$ and $\sup_{m\leq n+1}\|z_m\|<+\infty$. Then,  $z_n\in U(n)$ and $A_nz_n=v$ which proves that  $A_n|_{U(n)}\colon U_n\to U(n+1)$ is surjective.
\end{proof}

\begin{lemma}\label{lem: projections are bounded two sided}
For each $n\in \bbZ$, let $P_n\colon X\to S(n)$ be the projection associated with~\eqref{split two sided}. Then, there exists $D>0$ such that 
\begin{equation}\label{probound two sided}
\|P_n\| \le D\nu_n, \quad n\in \mathbb Z.
\end{equation}
\end{lemma}
\begin{proof}[Proof of Lemma \ref{lem: projections are bounded two sided}]
The proof of this result is the same, mutatis mutandis, as the proof of Lemma \ref{lem: projections are bounded}.
\end{proof}

\begin{lemma}\label{lem: bounds stable direction two sided}
There exists $C>0$ such that 
\[
\|\mathcal A(m, n)v\| \le C\left (\frac{\mu_m}{\mu_n}\right )^{-\beta} \nu_n \|v\|,\]
for every $m\ge n$ and $v\in S(n)$.
\end{lemma}

\begin{proof}[Proof of Lemma \ref{lem: bounds stable direction two sided}]
Let $n\in \bbZ$ and $v\in S(n)$. We define sequences $(y_m)_{m\in \mathbb Z}$ and $(x_m)_{m\in \mathbb Z}$ by 
\[
y_m=\begin{cases}
v & m=n; \\
0 & m\neq n,
\end{cases} \quad \text{and} \quad 
x_m=\begin{cases}
\mathcal A(m, n)v & m\ge n; \\
0& m<n.
\end{cases}
\]
Then, $\mathbf y=(y_m)_{m\in \mathbb N}\in \ell^1_\beta\cap \ell^1_{-\beta}$ and $\mathbf x=(x_m)_{m\in \mathbb Z}\in \ell^\infty_{-\beta}$. Moreover, \eqref{adm two sided} holds. Then, from Lemma~\ref{aux two sided} it follows that $\mathbf x\in \ell^\infty_{\beta}$. Thus, considering $T_{\beta}\colon \ell^1_{\beta}\to \ell^\infty_{\beta}$ as in Lemma \ref{lem: T_beta is bounded} and $C=\|T_\beta\|+1$ we get that
\[
\mu_m^\beta \|x_m\| \le \| \mathbf x\|_{\infty, \beta}= \|T_\beta(\mathbf y)\|_{\infty, \beta}\le C\|y\|_{1, \beta}=C\mu_n^\beta \nu_n \|v\|
\]
for $m\ge n$. Thus,
\[
\|\mathcal A(m, n)v\| \le C\left (\frac{\mu_m}{\mu_n}\right )^{-\beta}\nu_n \|v\|
\]
for every $m\ge n$ as claimed. 
\end{proof}

\begin{lemma}\label{lem: bounds unstable direction two sided}
There exists $\tilde C>0$ such that 
\[
\|\mathcal A(m, n)v\| \le \tilde C \left (\frac{\mu_n}{\mu_m}\right )^{-\beta}\nu_n \|v\|,\]
for every $m\leq  n$ and $v\in U(n)$.
\end{lemma}

\begin{proof}[Proof of Lemma \ref{lem: bounds unstable direction two sided}]     
Take $n\in \bbZ$ and $v\in U(n)$. Then by Lemma \ref{lem: isom two sided} we may consider 
\[
 y_m=\begin{cases}
 -v & m=n; \\
0 &m\neq n,
\end{cases}\quad \text{and} \quad x_m=\begin{cases}
\mathcal A(m, n)v & m<n;\\
0 &m\ge n.
\end{cases} 
\]
Note that $\mathbf y=(y_m)_{m\in \mathbb Z}\in \ell^1_{\beta}\cap \ell^1_{-\beta}$ and $\mathbf x=(x_m)_{m\in \mathbb Z}\in \ell^\infty_{\beta}$. Furthermore, \eqref{adm two sided} holds. Thus, by Lemma \ref{aux two sided} it follows that $\mathbf x\in \ell^\infty_{-\beta}$. Then, considering $T_{-\beta}\colon \ell^1_{-\beta}\to \ell^\infty_{-\beta}$ given in Lemma \ref{lem: T_beta is bounded} and taking $\tilde C=\| T_{-\beta}\|+1$ we get that
\[
\mu_m^{-\beta}\| x_m\| \leq \| \mathbf x\|_{\infty, -\beta}= \| T_{-\beta}(\mathbf y)\|_{\infty, -\beta}\le \tilde C\|y\|_{1, -\beta}=\tilde C\mu_n^{-\beta} \nu_n \|v\|,
\]
for every $m<n$. This easily implies that 
\[
\| \mathcal A(m,n)v\| \le \tilde C \left (\frac{\mu_n}{\mu_m}\right )^{-\beta}\nu_n \|v\|,
\]
for every $m<n$ as claimed.
\end{proof}
For $m, n\in \mathbb Z$, set
\[
\mathcal G(m, n):=\begin{cases}
\mathcal A(m, n)P_n & m\ge n; \\
-\mathcal A(m, n)(\Id-P_n) &m<n.
\end{cases}
\]
Then, by Lemmas~\ref{lem: projections are bounded two sided}, \ref{lem: bounds stable direction two sided} and~\ref{lem: bounds unstable direction two sided} there exists $C'>0$ such that 
\begin{equation}\label{G two sided}
\|\mathcal G(m, n)\| \le C'\begin{cases}
\left (\frac{\mu_m}{\mu_n}\right )^{-\beta}\nu_n^2 & m\ge n; \\
\left (\frac{\mu_n}{\mu_m}\right )^{-\beta}\nu_n^2
& m<n.
\end{cases}
\end{equation}
\begin{lemma}\label{Green two sided}
There exists $C''>0$ such that 
\[
\|\mathcal G(m, n)\| \le C''\begin{cases}
\left (\frac{\mu_m}{\mu_n}\right )^{-\beta}\nu_n & m\ge n; \\
\left (\frac{\mu_n}{\mu_m}\right )^{-\beta}\nu_n
& m<n.
\end{cases}
\]
\end{lemma}
\begin{proof}[Proof of the Lemma~\ref{Green two sided}]
Take $n\in \bbZ$ and $v\in X$. We define a sequence $\mathbf y=(y_m)_{m\in \mathbb Z}$ by $y_n=v$ and $y_m=0$ for $m\neq n$. Then, $\mathbf y\in \ell_{\beta}^1\cap \ell_{-\beta}^1$. Set
\[
x_m:=\mathcal G(m, n)v, \quad m\in \mathbb Z.
\]
It is easy to verify that~\eqref{adm two sided} holds.
We claim that $\mathbf x=(x_m)_{m\in \mathbb Z}\in \ell_{\beta}^\infty$. Indeed, by \eqref{G two sided}, for $m\ge n$ we have that 
\[
\|x_m\|=\|\mathcal G(m, n)v\|\le C'\left (\frac{\mu_m}{\mu_n}\right )^{-\beta}\nu_n^2\|v\|,
\]
which implies that 
\[
\sup_{m\geq n}(\mu_m^\beta \|x_m\|)<+\infty.
\]
Similarly, for $m< n$ we have that 
\[
\|x_m\|=\|\mathcal G(m, n)v\|\le C'\left (\frac{\mu_n}{\mu_m}\right )^{-\beta}\nu_n^2\|v\|,
\]
which implies that
\[
\mu_m^{-\beta} \|x_m\|\leq C'\mu_n^{-\beta}\nu_n^2\|v\|.
\]
Thus, since $\lim_{m\to -\infty}\mu_m=0$ and, in particular, $\mu_m^{\beta}\le \mu_m^{-\beta}$ for $m$ sufficiently small, the previous inequality implies that
\[
\sup_{m<n}(\mu_m^\beta \|x_m\|)<+\infty.
\]
Combining these observations we conclude that $\mathbf x=(x_m)_{m\in \mathbb Z}\in \ell_{\beta}^\infty$ and $T_\beta (\mathbf y)=\mathbf x$, where $T_\beta$ is as in Lemma~\ref{lem: T_beta is bounded}. Therefore, for $m\ge n$ we have that 
\[
\mu_m^\beta \|\mathcal G(m, n)v\|=\mu_m^\beta \|x_m\|\le \|\mathbf x\|_{\infty, \beta} \le \|T_{\beta}\| \cdot \|\mathbf y\|_{1, \beta}=\|T_\beta \| \mu_n^\beta \nu_n \|v\|,
\]
and thus
\[
\| \mathcal G(m, n)\| \le \|T_\beta\| \left(\frac{\mu_m}{\mu_n}\right)^{-\beta}\nu_n, \quad m\ge n.
\]
Similarly, one can treat the case when $m<n$.
\end{proof}

Now the proof of Theorem \ref{theo: adm to dich - two sided} can be easily completed by combining the previous auxiliary results. 
\end{proof}

\subsection{Persistence of $(\mu,\nu)$-dichotomy} We now present a version of Theorem \ref{theo: persistance unif bound} in the case of two-sided dynamics.

\begin{theorem}\label{theo: persistance unif bound two sided}
Let $(\gamma_n)_{n\in \bbZ}$ be a sequence of numbers with $\gamma_n>0$ for every $n\in \bbZ$ such that 
\begin{equation*}\label{eq: gamman is summable two sided}
    \sum_{n\in \bbZ}\gamma_n<+\infty.
\end{equation*}
Suppose that \eqref{LE} admits a $(\mu,\nu)$-dichotomy with $\lambda>0$ and condition \eqref{munu two sided} is satisfied and take $\beta \in (0,\lambda-\varepsilon)$. Moreover, let $(B_n)_{n\in \bbZ}$ be a sequence of operators in $\mathcal{B}(X)$ with the property that there exists $c>0$ such that 
    \begin{equation}\label{eq: unif condition on Bn two sided}
        \|B_n\|\le \frac{c\gamma_n\mu_{n}^\beta}{\nu_{n+1}\mu_{n+1}^\beta}, \quad \text{for $n\in \mathbb Z$.}
    \end{equation}
 Then, if $c$ is small enough we have that the nonautonomous difference equation
    \begin{equation*}\label{eq: perturbed eq two sided}
        x_{n+1}=(A_n+B_n)x_n, \quad n\in \bbZ
    \end{equation*}
also admits a $(\mu,\nu)$-dichotomy.
\end{theorem}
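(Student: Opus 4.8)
The plan is to transcribe the proof of Theorem~\ref{theo: persistance unif bound} to the two-sided setting, the only structural change being that the characterization of a $(\mu,\nu)$-dichotomy for $I=\bbZ$ (Theorems~\ref{theo: from dich to adm - two sided} and~\ref{theo: adm to dich - two sided}) involves \emph{three} admissible pairs rather than two. So I would first record the two-sided analogue of Theorem~\ref{theo: reformulation charct dich}: for a fixed $\beta>0$, introduce the closed linear operators $\mathbf A_\beta\colon\mathcal D(\mathbf A_\beta)\subset\ell^\infty_\beta\to\ell^1_\beta$, $\mathbf A_{-\beta}\colon\mathcal D(\mathbf A_{-\beta})\subset\ell^\infty_{-\beta}\to\ell^1_{-\beta}$ and $\mathbf A_{\beta,|\cdot|}\colon\mathcal D(\mathbf A_{\beta,|\cdot|})\subset\ell^\infty_{\beta,|\cdot|}\to\ell^1_{\beta,|\cdot|}$, each acting by $(\mathbf A\mathbf x)_n=x_n-A_{n-1}x_{n-1}$, $n\in\bbZ$, on its natural (graph) domain. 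Then Theorems~\ref{theo: from dich to adm - two sided} and~\ref{theo: adm to dich - two sided} say precisely that (i) a $(\mu,\nu)$-dichotomy for \eqref{LE} together with \eqref{munu two sided} forces these three operators to be invertible for every $\beta\in(0,\lambda-\varepsilon)$, and (ii) invertibility of the three operators (for some $\beta>0$) forces a $(\mu,\nu)$-dichotomy. Endowing each domain with its graph norm turns these operators into Banach-space isomorphisms, since their inverses are bounded by the Closed Graph Theorem argument from the proof of Lemma~\ref{lem: projections are bounded}.

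Next I would introduce the perturbation operators $\mathbf B_\beta\colon\ell^\infty_\beta\to\ell^1_\beta$, $\mathbf B_{-\beta}\colon\ell^\infty_{-\beta}\to\ell^1_{-\beta}$ and $\mathbf B_{\beta,|\cdot|}\colon\ell^\infty_{\beta,|\cdot|}\to\ell^1_{\beta,|\cdot|}$, all given by $(\mathbf B\mathbf x)_n=B_{n-1}x_{n-1}$, $n\in\bbZ$. The key point is that the single smallness hypothesis \eqref{eq: unif condition on Bn two sided} makes all three operators bounded with small norm: in each case one checks that, for $n\in\bbZ$, the $\ell^1$-weight of $(\mathbf B\mathbf x)_n$ is at most $c\gamma_{n-1}$ times the $\ell^\infty$-weight of $x_{n-1}$. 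For the pair $(\ell^\infty_\beta,\ell^1_\beta)$ this is immediate, since $\mu_n^\beta\nu_n\|B_{n-1}x_{n-1}\|\le c\gamma_{n-1}\mu_{n-1}^\beta\|x_{n-1}\|\le c\gamma_{n-1}\|\mathbf x\|_{\infty,\beta}$; for $(\ell^\infty_{-\beta},\ell^1_{-\beta})$ one picks up a harmless extra factor $(\mu_{n-1}/\mu_n)^{2\beta}\le1$; and for $(\ell^\infty_{\beta,|\cdot|},\ell^1_{\beta,|\cdot|})$ one splits according to the position of $n$ and $n-1$ relative to the threshold $n_0$ (which depends only on $\mu$), using $\mu_{n_0}\ge1$ and the monotonicity of $\mu$. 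Summing over $n\in\bbZ$ and using $\sum_{n\in\bbZ}\gamma_n<+\infty$ yields $\|\mathbf B_j\mathbf x\|\le c\big(\sum_{n\in\bbZ}\gamma_n\big)\|\mathbf x\|_\infty$ for each of the three indices $j$; in particular each $\mathbf B_j$ maps into the appropriate $\ell^1$-space, so the domain of $\mathbf A_j-\mathbf B_j$ coincides with $\mathcal D(\mathbf A_j)$ and the two graph norms are equivalent.

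With these estimates the conclusion runs exactly as in Theorem~\ref{theo: persistance unif bound}: regarding $\mathbf A_j-\mathbf B_j$ as an operator from $(\mathcal D(\mathbf A_j),\|\cdot\|_{\mathbf A_j})$ into $\ell^1_j$, one has $\|\mathbf A_j-(\mathbf A_j-\mathbf B_j)\|=\|\mathbf B_j\|\le c\sum_{n\in\bbZ}\gamma_n$, so for $c$ sufficiently small a Neumann series shows that $\mathbf A_j-\mathbf B_j$ is still invertible, for each of $j=\beta,-\beta,(\beta,|\cdot|)$. Finally, $(\mathbf A_j-\mathbf B_j)\mathbf x=\mathbf y$ is exactly the admissibility equation \eqref{adm two sided} for the perturbed system $x_{n+1}=(A_n+B_n)x_n$, and uniqueness in $\ell^\infty_j$ is the same as uniqueness in $\mathcal D(\mathbf A_j)$ because any $\ell^\infty_j$-solution automatically lies in that domain; hence the three admissibility hypotheses of Theorem~\ref{theo: adm to dich - two sided} hold for the perturbed equation and it admits a $(\mu,\nu)$-dichotomy.

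The bulk of the work is bookkeeping, and I expect the only genuinely delicate step to be verifying that the one smallness condition \eqref{eq: unif condition on Bn two sided} is strong enough for all three weight pairs simultaneously — in particular, that the non-monotone weights of $\ell^1_{\beta,|\cdot|}$ and $\ell^\infty_{\beta,|\cdot|}$, which change behaviour at $n_0$, do not spoil the bound on $\|\mathbf B_{\beta,|\cdot|}\|$; here it is essential that $n_0$ is determined by $\mu$ alone and hence is common to \eqref{LE} and its perturbation.
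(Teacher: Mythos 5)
Your proposal is correct and is essentially the proof the paper intends: the paper omits it, stating only that it is analogous to Theorem~\ref{theo: persistance unif bound}, and your argument is exactly that adaptation — reformulating Theorems~\ref{theo: from dich to adm - two sided} and~\ref{theo: adm to dich - two sided} as invertibility of the three operators $\mathbf A_\beta$, $\mathbf A_{-\beta}$, $\mathbf A_{\beta,|\cdot|}$ on their graph-norm domains and perturbing each by the correspondingly small $\mathbf B_j$. Your case analysis at the threshold $n_0$ (using monotonicity of $\mu$ and $\mu_{n_0}\ge 1$) correctly verifies the only point where the two-sided setting genuinely differs from the one-sided one.
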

\begin{proof}
    The proof of this result is similar to the proof of Theorem \ref{theo: persistance unif bound} and, therefore, we refrain from writing it.
\end{proof}
\begin{remark}
    A comment similar to Remark \ref{remark: cond on Bn} also applies to condition \eqref{eq: unif condition on Bn two sided}.
\end{remark}


\medskip{\bf Acknowledgements.}
We would like to thank the referees for their useful comments that helped us to improve our paper and to C. Silva for useful discussions related to~\cite{Silva}. D.D is also grateful to W. Zhang and L. Zhou for ideas related to~\cite{DZZ, DZZ1} which inspired results of the present paper.
L.~Backes was partially supported by a CNPq-Brazil PQ fellowship under Grant No. 307633/2021-7.
D.~Dragi\v cevi\' c was supported in part by University of Rijeka under the project uniri-iskusni-prirod-23-98
3046.

\vspace{0.1in}
\medskip{\bf Statements and Declarations}
\vspace{0.1in}

\textbf{Competing Interests:} no potential conflict of interest was reported by the authors.



\begin{thebibliography}{1}

\bibitem{AM}
B. Aulbach and  N. Van Minh, \emph{The concept of spectral dichotomy for linear difference equations II}, J. Differ. Equ. Appl. \textbf{2} (1996) 251--262.

\bibitem{BDV0}
L. Barreira, D. Dragi\v cevi\' c and C. Valls, \emph{Nonuniform hyperbolicity and admissibility}, Adv. Nonlinear Stud. \textbf{14} (2014),  791–811.

\bibitem{BDV1}
L. Barreira, D. Dragi\v cevi\' c and C. Valls, \emph{Nonuniform hyperbolicity and one-sided admissibility}, Atti Accad. Naz. Lincei Rend. Lincei Mat. Appl. \textbf{27} (2016),  235–247.

\bibitem{BDV2}
L. Barreira, D. Dragi\v cevi\' c and C. Valls, \emph{Admissibility on the half line for evolution families}, J. Anal. Math. \textbf{132} (2017), 157--176.

\bibitem{BDV}
 L. Barreira, D. Dragi\v cevi\'c and C. Valls, Admissibility and Hyperbolicity, SpringerBriefs Math., Springer, Cham, 2018.

 \bibitem{BV}
 L. Barreira and C. Valls, \emph{
 Growth rates and nonuniform hyperbolicity}, Discrete Contin. Dyn. Syst. \textbf{22} (2008), 509--528.


 \bibitem{BS2} 
A. J. Bento and C. M. Silva, \emph{Nonuniform $(\mu,\nu)$-dichotomies and local dynamics of difference equations}, Nonlinear Anal. \textbf{75} (2012), 78–90.

\bibitem{BS1} 
A. J. Bento and C. M. Silva, \emph{Generalized Nonuniform Dichotomies and Local Stable Manifolds}, J. Dyn. Diff. Equat. \textbf{25} (2013), 1139–1158.

\bibitem{BS3}
A. J. Bento and C. M. Silva, \emph{Robusteness of discrete nonuniform dichotomic behavior}, \href{https://arxiv.org/abs/1308.6820}{https://arxiv.org/abs/1308.6820}.

\bibitem{Chu}
J. Chu, \emph{Robustness of nonuniform behavior for discrete
 dynamics}, Bull. Sci. Math. \textbf{137} (2013), 1031--1047.



\bibitem{CS}
C.V. Coffman and  J.J. Sch\"{a}ffer, \emph{Dichotomies for linear difference equations}, Math. Ann. \textbf{172} (1967) 139--166.

\bibitem{Cop} 
W. Coppel, Dichotomies in Stability Theory, Lect. Notes in Math. 629, Springer, 1978.



\bibitem{DK} 
Ju. Dalec$'$ki\u{\i} and  M. Kre\u\i{n}, Stability of Solutions of Differential Equations in Banach Space, Translations of Mathematical Monographs 43, Amer. Math.
Soc., 1974.


\bibitem{D1}
D. Dragi\v cevi\'c, \emph{Admissibility and nonuniform polynomial dichotomies}, Math. Nachr. \textbf{293} (2019), 226–243.

\bibitem{D2}
D. Dragi\v cevi\' c, \emph{Admissibility and polynomial dichotomies for evolution families}, Commun. Pure Appl. Anal. \textbf{19} (2020),  1321–1336.


\bibitem{DPL}
D. Dragi\v cevi\' c, N. Jur\v cevi\' c Pe\v cek and N. Lupa, \emph{Admissibility and general dichotomies for evolution families}, Electron. J. Qual. Theory Differ. Equ. (2020), Paper No. 58, 19 pp.

\bibitem{DSS}
D. Dragi\v cevi\' c, A. L. Sasu and B. Sasu, \emph{Admissibility and polynomial dichotomy of discrete nonautonomous systems}, Carpathian J. Math. \textbf{38} (2022), 737--762.

\bibitem{DSS2}
D. Dragi\v cevi\' c, A. L. Sasu and B. Sasu, \emph{On the Robustness of Polynomial Dichotomy of Discrete Nonautonomous Systems}, Carpathian J. Math. \textbf{40} (2024), 643--654.

\bibitem{DSV}
D. Dragi\v cevi\' c, C. Silva and H. Vilarinho, \emph{Admissibility and generalized nonuniform dichotomies for nonautonomous random dynamical systems
}, J. Math. Anal. Appl. \textbf{549} (2025), 129441, 30pp.


\bibitem{DZZ}
D. Dragi\v cevi\'c, W. Zhang and L. Zhou, \emph{Admissibility and nonuniform exponential dichotomies}, J. Differ. Equ. \textbf{326} (2022), 201-226.

\bibitem{DZZ1}
D. Dragi\v cevi\' c, W. Zhang and L. Zhou, \emph{Measurable weighted shadowing for random dynamical systems on Banach spaces}, J. Differ. Equ. \textbf{392} (2024), 364--386.
 
\bibitem{Hen} 
D. Henry, Geometric Theory of Semilinear Parabolic Equations, Lecture Notes
in Mathematics 840, Springer-Verlag, Berlin-New York, 1981.

\bibitem{Huy}
N. T. Huy, \emph{Exponential dichotomy of evolution equations and admissibility of function spaces on a half-line}, J. Funct. Anal. \textbf{235} (2006), 330--354.
\bibitem{Li}
Ta Li, \emph{Die Stabilit\"{a}tsfrage bei Differenzengleichungen}, Acta Math. \textbf{63} (1934) 99--141.

\bibitem{LRS}
Y. Latushkin, T. Randolph and R. Schnaubelt, \emph{Exponential dichotomy and mild solutions of nonautonomous equations in Banach spaces}, J. Dynam. Differential Equations \textbf{10} (1998),  489--510.

\bibitem{LP1}
N. Lupa and L. H. Popescu, \emph{A complete characterization of exponential stability for discrete dynamics}, J. Difference Equ. Appl. \textbf{23} (2017), 2072--2092.

\bibitem{LP2}
N. Lupa and L. H. Popescu, \emph{Admissible Banach function spaces for linear dynamics with nonuniform behavior on the half-line}, Semigroup Forum \textbf{98} (2019),  184--208.


\bibitem{MS1} 
J. Massera and J. Sch\"affer, \emph{Linear differential equations and functional analysis, I}, Ann. of Math. \textbf{67} (1958), 517–573.


\bibitem{MS2}
J. Massera and J. Sch\"affer, Linear Differential Equations and Function Spaces,
Pure and Applied Mathematics 21, Academic Press, New York-London, 1966.

\bibitem{MSS1}
M. Megan, B. Sasu and A. L. Sasu, \emph{On nonuniform exponential dichotomy of evolution operators in Banach spaces}, Integral Equations Operator Theory \textbf{44} (2002), 71--78.

\bibitem{MSS2}
M. Megan, A. L. Sasu and B. Sasu, \emph{Discrete admissibility and exponential dichotomy for evolution families}, Discrete Contin. Dyn. Syst. \textbf{9} (2003),  383–397.


\bibitem{MRS}
N. Van Minh, F. R\"{a}binger and R. Schnaubelt, \emph{Exponential stability, exponential expansiveness, and exponential dichotomy of evolution equations on the half-line}, Integral Equations Operator Theory \textbf{32} (1998),  332--353.

\bibitem{MH}
N. Van Minh and N. T. Huy, \emph{Characterizations of exponential dichotomies of evolution equations on the half-line}, J. Math. Anal. Appl. \textbf{261} (2001) 28--44.

\bibitem{M}
J. S. Muldowney, \emph{Dichotomies and asymptotic behaviour for linear differential systems}, Trans. Amer. Math. Soc. \textbf{283} (1984), 465--484.

\bibitem{NP}
R. Naulin, M. Pinto, \emph{Roughness of $(h,k)$-dichotomies}, J. Differ. Equ. \textbf{118} (1995), 20--35.


\bibitem{Per}
O. Perron, \emph{Die stabilit\"atsfrage bei diﬀerentialgleichungen}, Math. Z. \textbf{32} (1930), 703–728.

\bibitem{Pinto}
M. Pinto, \emph{Discrete dichotomies}, Comput. Math. Appl., \textbf{28} (1994), 259–270.

\bibitem{PM}
P. Preda and M. Megan, \emph{Nonuniform dichotomy of evolutionary processes in Banach spaces}, Bull. Austral. Math. Soc. \textbf{27} (1983),  31--52.

\bibitem{S}
A. L. Sasu, \emph{Exponential dichotomy and dichotomy radius for difference equations}, J. Math. Anal. Appl. \textbf{344} (2008),  906--920.

\bibitem{SBS}
A. L. Sasu, M. G. Babuţia and B. Sasu, \emph{Admissibility and nonuniform exponential dichotomy on the half-line}, Bull. Sci. Math. \textbf{137} (2013), 466–484.


\bibitem{SS}
A. L. Sasu and B. Sasu, \emph{Admissibility criteria for nonuniform dichotomic behavior of nonautonomous systems on the whole line}, Appl. Math. Comput. \textbf{378} (2020), 125167.
\bibitem{SS2}
A. L. Sasu and B. Sasu, \emph{Strong exponential dichotomy of discrete nonautonomous systems: input-output criteria and strong dichotomy radius}, J. Math. Anal. Appl. \textbf{504} (2021),  Paper No. 125373, 29 pp.
\bibitem{SS1}
B. Sasu and A. L. Sasu, \emph{On the dichotomic behavior of discrete dynamical systems on the half-line}, Discrete Contin. Dyn. Syst. \textbf{33} (2013),  3057--3084.


\bibitem{Silva}
C. M. Silva, \emph{Admissibility and generalized nonuniform dichotomies for discrete dynamics}, Commun. Pure Appl. Anal., \textbf{20} (2021), 3419-3443.

\bibitem{Sch}
 J. Sch\"affer, \emph{Linear difference equations: Closedness of covariant sequences}, Math. Ann. \textbf{187} (1970), 69--76.

 \bibitem{Taylor} 
 A. E. Taylor, Introduction to Functional Analysis, John Wiley \& Sons, New York, 1958.

 \bibitem{WX}
M. Wu and Y. Xia, \emph{Admissibility and nonuniform exponential dichotomies for difference equations without bounded growth or Lyapunov norms}, Proc. Amer. Math. Soc. \textbf{151} (2023), 4389-4403.

 \bibitem{ZLZ}
L. Zhou, K. Lu and W. Zhang, \emph{Equivalences between nonuniform exponential dichotomy and admissibility}, J. Differ. Equ. \textbf{262} (2017), 682–747.


 \bibitem{ZZ}
 L. Zhou and W. Zhang, \emph{Admissibility and roughness of nonuniform exponential dichotomies for difference equations}, J. Funct. Anal. \textbf{271} (2016), 1087–1129.


 \end{thebibliography}
\end{document}